\documentclass{amsart}
\usepackage{math}
\usepackage{lineno,tikz,mathabx,mathrsfs,mathtools,hyperref}
\usetikzlibrary{cd,quotes,angles,decorations,arrows,calc,automata}

\newtheorem{obs}[thm]{Observation}

\usepackage{algorithm}
\usepackage[noend]{algpseudocode}
\newcommand*\Let[2]{\State #1 $\gets$ #2}

\renewcommand\tree{{\mathcal T}}

\begin{document}

\title[Generating Pólya trees and extending Cayley's formula]{An algorithm for uniform generation of unlabeled trees (Pólya trees), with an extension of Cayley's formula}
\author{Laurent Bartholdi}
\email{L.B.: laurent.bartholdi@gmail.com}
\address{L.B.: FR Mathematik+Informatik, Saarland University}
\author{Persi Diaconis}
\address{P.D.: Department of Mathematics and Statistics, Stanford University}
\date{November 26, 2024}
\thanks{L.B.\ gratefully acknowledges partial support from the ERC AdG grant 101097307.\\
P.D.\ gratefully acknowledges partial support from NSF grant 1954042.}
\begin{abstract}
  Pólya trees are rooted, unlabeled trees on $n$ vertices. This paper gives an efficient, new way to generate Pólya trees. This allows comparing typical unlabeled and labeled tree statistics and comparing asymptotic theorems with `reality'.

  Along the way, we give a product formula for the number of rooted labeled trees preserved by a given automorphism; this refines Cayley's formula.
\end{abstract}
\maketitle

\section{Introduction}
A classical theorem (Cayley's formula) says that there are $n^{n-2}$ \emph{labeled} trees on $\{1,\dots,n\}$, rooted at $1$:
\def\treeA#1#2#3#4{\node[fill,label=right:{#1}] (a) at (0,0) {};
  \node[label=right:{#2}] (b) at (0,-0.5) {};
  \node[label=right:{#3}] (c) at (0,-1) {};
  \node[label=right:{#4}] (d) at (0,-1.5) {};
  \draw (a) -- (b) -- (c) -- (d);}
\def\treeB#1#2#3#4{\node[fill,label=right:{#1}] (a) at (0,-0.1) {};
  \node[label=right:{#2}] (b) at (0,-0.7) {};
  \node[label=right:{#3}] (c) at (-0.3,-1.3) {};
  \node[label=right:{#4}] (d) at (0.3,-1.3) {};
  \draw (a) -- (b) -- (c) (b) -- (d);}
\def\treeC#1#2#3#4{\node[fill,label=right:{#1}] (a) at (0,-0.1) {};
  \node[label=right:{#2}] (b) at (-0.3,-0.7) {};
  \node[label=right:{#3}] (c) at (0.3,-0.7) {};
  \node[label=right:{#4}] (d) at (0.3,-1.3) {};
  \draw (a) -- (b) (a) -- (c) -- (d);}
\def\treeD#1#2#3#4{\node[fill,label=right:{#1}] (a) at (0,-0.3) {};
  \node[label=right:{#2}] (b) at (-0.5,-1.1) {};
  \node[label=right:{#3}] (c) at (0,-1.1) {};
  \node[label=right:{#4}] (d) at (0.5,-1.1) {};
  \draw (a) -- (b) (a) -- (c) (a) -- (d);}

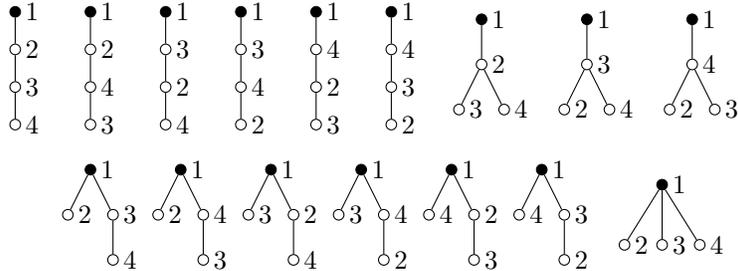
\begin{figure}[h!]
  \centerline{\begin{tikzpicture}[every node/.style={circle,inner sep=0pt,minimum size=4pt,draw}]
    \begin{scope}[xshift=0cm]\treeA1234\end{scope}
    \begin{scope}[xshift=1cm]\treeA1243\end{scope}
    \begin{scope}[xshift=2cm]\treeA1324\end{scope}
    \begin{scope}[xshift=3cm]\treeA1342\end{scope}
    \begin{scope}[xshift=4cm]\treeA1423\end{scope}
    \begin{scope}[xshift=5cm]\treeA1432\end{scope}
    \begin{scope}[xshift=6.2cm]\treeB1234\end{scope}
    \begin{scope}[xshift=7.6cm]\treeB1324\end{scope}
    \begin{scope}[xshift=9cm]\treeB1423\end{scope}
    \begin{scope}[yshift=-2cm,xshift=1cm]\treeC1234\end{scope}
    \begin{scope}[yshift=-2cm,xshift=2.2cm]\treeC1243\end{scope}
    \begin{scope}[yshift=-2cm,xshift=3.4cm]\treeC1324\end{scope}
    \begin{scope}[yshift=-2cm,xshift=4.6cm]\treeC1342\end{scope}
    \begin{scope}[yshift=-2cm,xshift=5.8cm]\treeC1423\end{scope}
    \begin{scope}[yshift=-2cm,xshift=7cm]\treeC1432\end{scope}
    \begin{scope}[yshift=-2cm,xshift=8.6cm]\treeD1234\end{scope}
  \end{tikzpicture}}
\caption{The $16$ labeled rooted trees at $1$, for $n=4$. Here and below the root vertex is indicated as solid, and non-root vertices are hollow.}
\end{figure}

\begin{figure}[h!]
  \centerline{\begin{tikzpicture}[every node/.style={circle,inner sep=0pt,minimum size=4pt,draw}]
    \begin{scope}[xshift=0cm]\treeA{}{}{}{}\end{scope}
    \begin{scope}[xshift=2cm]\treeB{}{}{}{}\end{scope}
    \begin{scope}[xshift=4cm]\treeC{}{}{}{}\end{scope}
    \begin{scope}[xshift=6cm]\treeD{}{}{}{}\end{scope}
  \end{tikzpicture}}
\caption{The $4$ unlabeled rooted trees, for $n=4$.}\label{fig:unlabeled}
\end{figure}
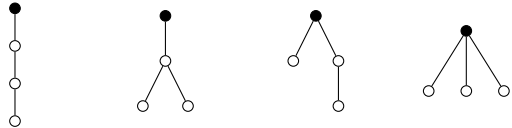

These are called \emph{Cayley trees} in his honour. In contrast, there is no formula for rooted, \emph{unlabeled} trees, called \emph{Pólya trees} to remember that Pólya~\cite{polya;chemische} (followed by Otter~\cite{otter;trees}) determined their asymptotics:
\begin{thm}\label{thm:otter}
  The number of rooted, unlabeled trees on $n$ vertices is asymptotic to
  \begin{equation}\label{eq:polyacount}
    \frac{b\sqrt{\rho}}{2\sqrt\pi}n^{-3/2}\rho^{-n}\big(1+\mathcal O(\tfrac1n)\big)
  \end{equation}
  with $b=2.6811266\dots$ and $\rho=0.338219\dots$.
\end{thm}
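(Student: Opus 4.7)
The plan is to prove this by standard analytic combinatorics, starting from Pólya's functional equation for the ordinary generating function $T(x) = \sum_{n\ge 1} t_n x^n$, where $t_n$ is the number of rooted unlabeled trees on $n$ vertices. Since such a tree is a root attached to an unordered multiset of rooted subtrees, Pólya's enumeration theorem yields
$$T(x) = x \exp\!\Bigl(\sum_{k\ge 1} \tfrac{1}{k} T(x^k)\Bigr).$$
I would first establish that $T$ has positive radius of convergence $\rho\in(0,1)$, for instance by comparing $t_n$ with Catalan numbers (which count plane rooted trees) to get $\rho\ge 1/4$, and $\rho<1$ because $t_n\to\infty$.

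The next step is to locate the singularity. Setting $G(x) = \sum_{k\ge 2} T(x^k)/k$, which is analytic on the disc of radius $\sqrt\rho>\rho$, the equation rewrites as $T = x e^T e^G$. Define $F(x,y) = y - x e^y e^{G(x)}$, so that $F(x,T(x))\equiv 0$. Wherever $\partial_y F\neq 0$ the implicit function theorem gives a local analytic continuation of $T$; at $x=\rho$ analyticity must fail, and the standard positivity argument forces $\partial_y F(\rho,T(\rho))=0$, i.e.\ $T(\rho)=1$. Combined with the functional equation this pins down $\rho$ as the positive root of $\rho\, e^{1+G(\rho)}=1$, which can be evaluated numerically to give $\rho=0.338\ldots$

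The heart of the argument is then to show that near $(\rho,1)$, expansion of $F$ to second order in $y-1$ and using $\partial_y F(\rho,1)=0$, $\partial_{yy}F(\rho,1)\neq 0$ produces a square-root singular expansion
$$T(x) = 1 - b\sqrt{1-x/\rho} + c\,(1-x/\rho) + \mathcal O\!\bigl((1-x/\rho)^{3/2}\bigr),$$
with $b>0$ expressible in terms of the derivatives of $F$ at $(\rho,1)$; numerically this gives $b=2.6811\ldots$. The Flajolet--Odlyzko transfer theorem then converts the $\sqrt{1-x/\rho}$ term into $t_n=[x^n]T(x)\sim \frac{b\sqrt\rho}{2\sqrt\pi}\,n^{-3/2}\rho^{-n}$; the linear term in the expansion contributes nothing to the coefficients (it is entire after subtraction), and the next singular term gives the $\mathcal O(1/n)$ refinement.

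The main technical obstacle is justifying the transfer theorem, which requires that $T$ continue analytically to a ``$\Delta$-domain'' around $\rho$, i.e.\ to a region slightly larger than the disc of radius $\rho$ with a small sector at $\rho$ removed. This reduces to showing that $\rho$ is the \emph{unique} singularity on the circle $|x|=\rho$, an aperiodicity condition. Since $t_n>0$ for all $n\ge 1$ and $\gcd\{n:t_n>0\}=1$, a Perron--Frobenius-style argument applied to the nonlinear functional equation (exploiting that $G$ is analytic beyond $\rho$, so only the $T(x)$ term can produce singularities on $|x|=\rho$) rules out other dominant singularities. Once this is in hand, the remainder is a mechanical computation of derivatives of $F$ at $(\rho,1)$.
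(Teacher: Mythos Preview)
The paper does not prove Theorem~\ref{thm:otter}; it is quoted as a classical result, with citations to P\'olya and Otter, and only the numerical computation of $b,\rho$ is discussed later (via Newton iteration on the truncated functional equation). So there is no ``paper's own proof'' to compare against.

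That said, your outline is the standard modern route (essentially Otter's argument, recast in Flajolet--Odlyzko language) and is correct in its architecture: functional equation, $G(x)=\sum_{k\ge2}T(x^k)/k$ analytic beyond $\rho$, implicit-function failure forcing $T(\rho)=1$, square-root local expansion, and transfer. Two small points worth tightening if you write it out in full. First, the claim ``analyticity must fail at $\rho$, hence $\partial_y F(\rho,T(\rho))=0$'' needs the observation that $T(\rho)$ is finite; this follows because $T$ has nonnegative coefficients and $T(\rho)=\lim_{x\uparrow\rho}T(x)$ satisfies the functional equation, which bounds it (or one can invoke the a~priori bound $T(x)\le$ the ordered-tree series). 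Second, for the $\Delta$-domain continuation you need a bit more than aperiodicity of the support of $(t_n)$: you must argue that on $|x|=\rho$, $x\ne\rho$, the value $T(x)$ (which exists since $T(\rho)<\infty$) satisfies $|T(x)|<T(\rho)=1$ strictly, so that $\partial_y F\ne0$ there and the implicit function theorem pushes $T$ past the circle. The strict inequality is where positivity of all $t_n$ is actually used. With those two points filled in, your sketch is a complete proof.
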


Even for small $n$, the number of labeled and unlabeled trees are quite different. The following numbers are from the Online Encyclopedia of Integer Sequences: \cite[A000081]{oeis} (Pólya) and~\cite[A000272]{oeis} (Cayley):
\begin{equation}\label{table:1}
  \begin{array}{c|cccccccccc}
    n & 1 & 2 & 3 & 4 & 5 & 6 & 7 & 8 & 9 & 10\\ \hline
    \text{Pólya} & 1 & 1 & 2 & 4 & 9 & 20 & 48 & 115 & 286 & 719\\
    \text{Cayley} & 1 & 1 & 3 & 16 & 125 & 1296 & 16807 & 262144 & 4782969 & 10^8
  \end{array}
\end{equation}

It is natural to suspect that properties of labeled and unlabeled trees will behave differently. There has been a good deal of elegant asymptotic theory for both classes of trees, reviewed in Section~\ref{sec:background}. The main result of this paper is to supplement the asmyptotics by providing a Monte Carlo algorithm to generate uniformly random Pólya trees (it is easy to generate random Cayley trees using Prüfer codes, as explained in~\S\ref{ss:cayleytrees}).

The algorithm gives a formula for the number of Cayley trees admitting a given automorphism. For integers $m,x,y$, consider $f(m,x,y)=(mx+y)^{m-1}y$. Let $\sigma$ be a permutation in $S_n$ fixing $1$, with $\lambda_d$ $d$-cycles. Set $\mu_d = \sum_{e|d} e\lambda_d$.
\begin{thm}[= Theorem~\ref{thm:count}]
  The number of labeled trees rooted at $1$ on $[n]$ that are $\sigma$-invariant is
  \[\lambda_1^{\lambda_1-2}\cdot\prod_{d\ge2,\lambda_d\neq0} f(\lambda_d,d,\mu_d).\]
\end{thm}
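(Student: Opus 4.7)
My plan is to decompose any $\sigma$-invariant rooted tree into independently chosen pieces indexed by orbit size, then count each piece via a weighted matrix-tree computation.

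\textbf{Structural decomposition.} Since paths in a tree are unique, the fixed set of any tree automorphism is connected. Hence the $\sigma$-fixed vertices form a subtree $T^\sigma$ containing the root $1$ on the $\lambda_1$ fixed vertices; by Cayley's formula there are $\lambda_1^{\lambda_1-2}$ such rooted subtrees. For each $\sigma$-orbit $O$ of size $d\ge2$ appearing in a $\sigma$-invariant tree, the parent function $p$ restricted to $O$ is $\sigma$-equivariant, so $p(O)$ is contained in a single $\sigma$-orbit $O'$. I will check that $O'\neq O$ (otherwise iteration of $p$ within $O$ would produce a cycle) and that $|O'|$ divides $d$ (since every element of $O'$ is fixed by $\sigma^d$). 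Thus each $d$-orbit is attached, through the parent of a chosen reference element, to some vertex of $\mathrm{Fix}(\sigma^d)\setminus O$.

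\textbf{Factorisation over $d$.} The orbit-parent relation sends a size-$d$ orbit to an orbit whose size divides $d$, so orbit size weakly decreases along parent chains. Any cycle in the orbit-level quotient graph must therefore lie within a single size class $d$. Consequently, given $T^\sigma$ and the attachments of all orbits of size strictly less than $d$, the only constraint on the $\lambda_d$ $d$-orbit attachments is that their mutual assignment be cycle-free, and the count depends only on $\lambda_d$ and on the number $\mu_d$ of $\sigma^d$-fixed points lying in orbits of size strictly less than $d$. The total count thus factors as $\lambda_1^{\lambda_1-2}\prod_{d\ge2,\,\lambda_d\ne0}N_d$, where $N_d$ is the $d$-orbit attachment count.

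\textbf{Computing $N_d$ via matrix-tree.} I would model $N_d$ as a spanning-tree count on a multigraph $G_d$ on vertex set $\{o_1,\dots,o_{\lambda_d},\star\}$, placing $d$ parallel edges between each pair $\{o_i,o_j\}$ of orbit-vertices (the $d$ choices of position within the parent $d$-orbit) and $\mu_d$ parallel edges from each $o_i$ to the super-root $\star$ (the $\mu_d$ choices of attachment point in a smaller orbit). A spanning tree of $G_d$, rooted at $\star$ and enriched by specifying which parallel copy realises each tree edge, corresponds bijectively to a valid $d$-orbit attachment pattern. By the matrix-tree theorem, $N_d$ equals the determinant of the reduced Laplacian, which is the $\lambda_d\times\lambda_d$ matrix $(d\lambda_d+\mu_d)I-dJ$. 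Since $J$ has eigenvalues $\lambda_d$ (simple) and $0$ (multiplicity $\lambda_d-1$), this determinant equals $\mu_d(d\lambda_d+\mu_d)^{\lambda_d-1}=f(\lambda_d,d,\mu_d)$, as required.

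The main obstacle is the factorisation step: one must check carefully that the decomposition ``fixed subtree plus per-$d$ orbit-parent data'' is a bijection onto tuples satisfying the obvious local constraints, and that cycle-avoidance genuinely splits across $d$. Once that is in place, the matrix-tree calculation is a short determinant evaluation.
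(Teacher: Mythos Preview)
Your argument is correct. The structural decomposition you set up---fixed subtree on the $\lambda_1$ fixed points, then for each $d\ge2$ a forest on the $d$-orbits whose roots attach to vertices in strictly smaller orbits---is exactly the one the paper uses (phrased there as an induction peeling off the maximal cycle length $d$). Where you diverge is in how you count the per-$d$ factor: the paper establishes $f(m,x,y)=(mx+y)^{m-1}y$ via a generalised Pr\"ufer bijection between decorated forests on $[m]$ and sequences in $([m]\times[x]\cup\{0\}\times[y])^m$ ending in $\{0\}\times[y]$, whereas you obtain the same number as the cofactor $\det\bigl((d\lambda_d+\mu_d)I-dJ\bigr)$ of a weighted Laplacian via the matrix-tree theorem. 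Your route is shorter and avoids the bijective machinery; the paper's route is chosen because the Pr\"ufer encoding is then reused constructively in the sampling algorithm (one draws a uniform $\sigma$-invariant tree by drawing a uniform $\sigma$-Pr\"ufer sequence). So both arguments rest on the same decomposition, but yours trades constructivity for a cleaner determinant computation.
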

When $\sigma = \text{id}$, we have $\lambda_1 = n$, all trees are fixed and the formula boils down to $n^{n-2}$.

Background on trees and their asymptotic theory is collected in Section~\ref{sec:background}. We also review there the Burnside process, a general Monte Carlo Markov chain approach to generating random orbits for a finite group acting on a finite set --- this is the basis of our algorithm.

The algorithm itself is explained in Section~\ref{sec:algo}. Section~\ref{sec:compare} illustrates the algorithm. It compares the distribution of labeled and unlabeled trees, and compares asymptotic theory with simulations. It may be read directly for further motivation.

\subsection{Acknowledgments}
We thank David Aldous, Maciej Bendkowski, Sergey Dovgal, \'Eric Fusy, Slava Grigorchuk, Susan Holmes, Michael Howes, Ivan Mitrofanov, Evita Nestoridi, Jim Pitman, Carine Pivoteau, Nathan Tung and Stephan Wagner for their help with this project.

\section{Background}\label{sec:background}
This section reviews the extensive prior literature on trees (\S\ref{ss:trees}), Cayley trees (\S\ref{ss:cayleytrees}) and Pólya trees (\S\ref{ss:polyatrees}). These last two classes have global ``shape properties'' captured by Aldous' ``continuum random tree'' explained in~\S\ref{ss:crt}. The ``Burnside process'', which is the basis of our algorithm, is explained in~\S\ref{ss:burnside}.

\subsection{Trees}\label{ss:trees}
Because of their surprisingly wide applicability, there is a large combinatorial, enumerative and probabilistic literature on trees. Knuth~\cite{knuth;art-i} offers a succint review of the basics, see in particular~\cite[\S2.3.4.4]{knuth;art-i}. This is supplemented by the $40^+$-page development in~\cite[\S7.2.1.6]{knuth;art-4a}. The classical book by Moon~\cite{moon;counting-labelled-trees} develops probabilistic aspects which are brought much further in the readable account of Drmota~\cite{drmota;trees}. Any serious account of graph theory~\cite{diestel;graph-theory,bondy-murty;graph-theory,bollobas;modern-graph-theory} has a chapter on trees.

Of course, trees come in many flavours; phylogenetic trees~\cite{holmes;statistics-phylogenetic,felsenstein;inferring-phylogenetic}, real trees~\cite{evans;probability-real-trees}, A-B trees~\cite{odlyzko;new-tree-enumeration}, search trees~\cite{mahmoud;search-trees}, \dots. The present paper deals only with Cayley and Pólya trees, but we think the distinction between labeled and unlabeled, and our new algorithm, can be carried over to some of these classes.

Note that all the trees we consider are \emph{rooted}, namely come with a distinguished vertex. This is often dictated by applications (search trees, for example), but at any rate not a serious restriction: every tree has a central vertex or central edge, so it costs nothing to put the root there. Once a tree is rooted, it naturally admits an ordering of its edges towards the root, a notion of \emph{leaf}, and a recursive decomposition obtained by cutting the top branches off the root and viewing them as smaller rooted trees.

A wide variety of ``features'' are studied in the literature:
\begin{description}
\item[distances] the \emph{height} of a tree is the maximum distance from the root to a leaf, and its \emph{path length} is the sum of all distances to the root. Also of interest is ``pick a pair of vertices at random. What is the distribution of their distance?'' . This was Aldous' original motivation for his continuum random tree;
\item[growth] for $k=1,2\dots,\text{height}$, let $w_k$ be the number of vertices at distance $k$. The vector $(w_1,w_2,\dots,w_\text{height})$ is called the \emph{profile} and its maximum is called the \emph{width} of the tree;
\item[degrees] the maximum degree, the average degree, the empirical measure of degrees, and the number of leaves; we could list the degree of the root here, and also above as $w_1$.
\end{description}

For special applications, particularly phylogenetic trees, a variety of additional ``shape parameters'' are of interest. See~\cite{matsen} for a review of some, and the extensive book~\cite{fischer;tree-balance-indices}.

These features give rise to the question \emph{what does a typical tree ``look like''}?

\subsection{Cayley trees}\label{ss:cayleytrees}
There are \emph{many} different proofs that there are $n^{n-2}$ labeled trees rooted at $1$. See Lovasz~\cite{lovasz;combinatorial-problems} or \cite{addario-berry;foata-cayley}. One that we find algorithmically useful is via \emph{Prüfer codes}. This assigns to a tree $\tree$ a sequence of integers $[a_1,\dots,a_{n-2}]$, with $1\le a_i\le n$, giving a bijection between Cayley trees $\mathscr C_n$ and $[n]^{n-2}$. The mapping is easy to state. Given $\tree\in\mathscr C_n$, remove the leaf edge with the lowest leaf (the root never counts as a leaf) and let $a_1$ be the label on the other side of the edge. Continue with the remaining tree to generate $a_2,\dots,a_{n-2}$. For example,
\[\begin{tikzpicture}[every node/.style={circle,inner sep=0pt,minimum size=4pt,draw},baseline=-0.7cm]
    \node[label=right:1,fill] (a) at (0,0) {};
    \node[label=left:2] (b) at (-0.3,-0.7) {};
    \node[label=right:3] (c) at (0.3,-0.7) {};
    \node[label=left:4] (d) at (0,-1.4) {};
    \node[label=right:5] (e) at (0.6,-1.4) {};
    \draw (a) -- (b) (a) -- (c) -- (d) (c) -- (e);
  \end{tikzpicture}
  \leftrightarrow [1,3,3]
\]
since the leaves are removed in order $2,4,5$ and their neighbours are respectively $1,3,3$. It is similarly easy to reconstruct $\tree$ from the code.

By inspection, the degrees $d_i$ of vertices $1,\dots,n$ can be read from the code $[a_1,\dots,a_{n-2}]$ via $d_i=n_i+1$ if $i$ occurs $n_i$ times in the code. For the example above, the code $[1,3,3]$ has $d_1=2$, $d_2=d_4=d_5=1$, $d_3=3$. The number of trees with degree sequence $d_1,\dots,d_n$ is therefore
\[\binom{n-2}{d_1-1\quad\cdots\quad d_n-1}.\]

\noindent Translated as a probability statement, this becomes the following
\begin{prop}[Folklore]\label{prop:cayleydegrees}
  For $\tree\in\mathscr C_n$ chosen uniformly, the joint distribution of the degrees $(d_1,\dots,d_n)$ is exactly the same as the joint distribution of the box counts $(n_1,\dots,n_n)$ when $n-2$ balls are dropped randomly into $n$ boxes (with $d_i=n_i+1)$.\qed
\end{prop}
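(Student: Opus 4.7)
The plan is to deduce the statement directly from the Prüfer code bijection recalled just above. The key observation is that the Prüfer map $\tree \mapsto [a_1,\dots,a_{n-2}]$ is a \emph{bijection} between $\mathscr C_n$ and $[n]^{n-2}$, so picking $\tree$ uniformly from $\mathscr C_n$ is equivalent to picking the code $[a_1,\dots,a_{n-2}]$ uniformly from $[n]^{n-2}$. Under the uniform distribution on $[n]^{n-2}$, the coordinates $a_1,\dots,a_{n-2}$ are i.i.d.\ uniform on $[n]$; this is exactly the process of dropping $n-2$ labeled balls into $n$ boxes, where ball $k$ lands in box $a_k$. Hence the vector of occurrence counts $(n_1,\dots,n_n)$, with $n_i=\#\{k:a_k=i\}$, has the box-count distribution described in the statement.

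Next I would invoke the degree/occurrence-count relation already pointed out in the text: if $i$ appears $n_i$ times in the Prüfer code then the degree of $i$ in $\tree$ is $d_i=n_i+1$. This identity holds as an equality of functions on $\mathscr C_n$, not just in distribution, so transporting the uniform measure on $\mathscr C_n$ to $[n]^{n-2}$ via the Prüfer bijection carries the joint law of $(d_1,\dots,d_n)$ to the joint law of $(n_1+1,\dots,n_n+1)$. Combined with the previous paragraph, this is precisely the claimed distributional equality.

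If I wanted to be slightly more careful, I would briefly justify the degree/count formula: in each step of the Prüfer encoding one removes a leaf and records its unique neighbour, so the number of times vertex $i$ gets recorded equals the number of neighbours of $i$ that get pruned before $i$ itself; every neighbour except the one on the root side is eventually pruned, which contributes $d_i-1$ appearances, giving $n_i=d_i-1$. The root $1$ is never a leaf and never removed, and the same bookkeeping applies to it.

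There is no real obstacle here: the only thing to check is the elementary claim that uniform measure on $[n]^{n-2}$ is precisely the balls-in-boxes law, which is immediate from the product structure of the uniform distribution on a Cartesian power. The entire proposition is essentially a restatement of the Prüfer bijection together with the trivial observation that degree minus one equals multiplicity in the code.
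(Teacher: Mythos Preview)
Your argument is correct and is exactly the route the paper takes: the proposition is stated with a \qed and no separate proof, because the paragraph immediately preceding it already gives the Prüfer bijection $\mathscr C_n\leftrightarrow[n]^{n-2}$ together with the observation $d_i=n_i+1$, and the proposition is explicitly introduced as the probabilistic translation of that count. Your write-up just spells out the same two ingredients (bijection $\Rightarrow$ uniform code $\Rightarrow$ i.i.d.\ balls-in-boxes, plus $d_i=n_i+1$) with a bit more detail than the paper bothers to give.
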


From this, standard facts about ``balls in boxes'' (multinomial allocation) yield theorems for trees. This is developed in~\cite{renyi;theory-trees,moon;counting-labelled} and elsewhere. We content ourselves with a corollary:
\begin{cor}
  For $\tree\in\mathscr C_n$ chosen uniformly, let $\ell(\tree)$ be the number of leaves. Then $\ell(\tree)$ has mean and variance
  \[\mathbb E_n(\ell(\tree))\sim\frac ne,\qquad\mathbb V_n(\ell(\tree))\sim n\frac1e(1-\tfrac1e)\]
  and, normalized by its mean and standard deviation, $\ell(\tree)$ has a limiting standard normal distribution.
\end{cor}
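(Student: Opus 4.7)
The plan is to apply Proposition~\ref{prop:cayleydegrees} to rewrite $\ell(\tree)$ in terms of the classical ``empty boxes'' statistic of multinomial occupancy. A vertex $i$ is a leaf precisely when $d_i=1$, i.e.\ when $n_i=0$, so $\ell(\tree)$ has the same distribution as $\sum_{i=1}^n X_i$ with $X_i=\mathbf 1[n_i=0]$ and $(n_1,\dots,n_n)$ the occupation numbers of $n-2$ balls thrown uniformly into $n$ boxes.

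The mean is immediate from $\mathbb P(n_i=0)=(1-1/n)^{n-2}$, giving $\mathbb E_n(\ell(\tree))=n(1-1/n)^{n-2}\sim n/e$. For the variance I would use $\mathbb P(n_i=n_j=0)=(1-2/n)^{n-2}$ for $i\ne j$, whence
\[\mathbb V_n(\ell(\tree))=n(1-1/n)^{n-2}+n(n-1)(1-2/n)^{n-2}-n^2(1-1/n)^{2(n-2)};\]
the two $e^{-2}n^2$ contributions cancel, so one needs the second-order term in $\log(1-x)=-x-x^2/2+\cdots$ to see that what survives is on the scale $n$ and of the stated form.

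For the asymptotic normality, the cleanest route is Poissonisation: replacing the fixed total $n-2$ by a Poisson$(n-2)$ variable makes the box counts i.i.d.\ Poisson$((n-2)/n)$, so the indicators $X_i$ become i.i.d.\ Bernoulli and the Lindeberg--L\'evy CLT applies. A de-Poissonisation step then transfers the limit back to the fixed-$m$ model: the random total $N$ fluctuates on the scale $\sqrt n$, each extra or missing ball alters $\ell$ by at most one, so conditioning on $N=n-2$ perturbs $\ell$ only by $\mathcal O(\sqrt n)$, which is absorbed into the normalisation. An equivalent approach is the method of factorial moments, via the exact identity $\mathbb E[\ell(\ell-1)\cdots(\ell-k+1)]=n(n-1)\cdots(n-k+1)(1-k/n)^{n-2}$, matched termwise to those of a Gaussian.

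The main technical nuisance is the variance bookkeeping, because of the $n^2e^{-2}$ cancellation between the pairwise and square-of-mean terms; once that is handled the CLT itself is routine, with de-Poissonisation the only nonelementary ingredient, and it is standard in multinomial occupancy asymptotics.
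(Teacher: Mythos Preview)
Your proposal is correct and follows essentially the same approach as the paper: translate $\ell(\tree)$ via Proposition~\ref{prop:cayleydegrees} into the empty-cells count for $n-2$ balls in $n$ boxes, compute $\mathbb E(X_i)=(1-1/n)^{n-2}$ and $\mathbb E(X_iX_j)=(1-2/n)^{n-2}$, and then invoke a CLT for multinomial occupancy. The paper is terser on the last step, simply citing the classical reference~\cite{kolchin-chistyakov;combinatorial-problems} and calling it ``an easy variant of the central limit theorem'', whereas you spell out two concrete mechanisms (Poissonisation/de-Poissonisation or factorial moments); both are standard routes to the same result.
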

\begin{proof}
  This is just a translation of classical facts about $n-2$ balls dropped into $n$ boxes and the distribution of the empty cells, see~\cite{kolchin-chistyakov;combinatorial-problems}. For example, if
  \[X_i=\begin{cases}1&\text{ if box $i$ is empty},\\0&\text{ else}\end{cases},\qquad \ell(\tree)=\sum_{i=1}^n X_i;\]
  and $\mathbb E_n(X_i)=(1-\frac1n)^{n-2}=\frac1e(1+\mathcal O(\tfrac1n)$, $\mathbb E_n(X_i X_j)=(1-\frac2n)^{n-2}$ for $i\ne j$, and an easy variant of the central limit theorem yields the corollary.
\end{proof}
By symmetry, Proposition~\ref{prop:cayleydegrees} also applies to the degree of the root.

The maximum degree in a tree $\tree$ is one more than the maximal box count if $n-2$ balls are randomly dropped into $n$ boxes. By now, it is well known that the maximum of integer valued random variables tend \emph{not} to have limiting distributions. A nice paper of Carr, Goh and Schmutz~\cite{carr-goh-schmutz;degree} shows that the maximum is concentrated around $\lfloor\log n/\log\log n\rfloor$. This follows from a more refined result:
\begin{thm}\label{thm:Delta}
  Let $\tree\in\mathscr C_n$ be chosen uniformly, and let $\Delta(\tree)$ be the maximum degree. Then, for $k_n\sim\log n/\log\log n$,
  \[\mathbb P(\Delta(\tree)\le k_n) = \exp\Big(-\exp\big(\log n -k_n \log k_n +k_n -\frac12\log k_n -\log(e\sqrt{2\pi})+o(1)\big)\Big) + o(1).\]
\end{thm}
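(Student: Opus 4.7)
The plan is to invoke Proposition~\ref{prop:cayleydegrees} to convert the question into a classical extreme-value problem for multinomial occupancy. By that proposition, the degree vector $(d_1,\dots,d_n)$ of a uniform $\tree\in\mathscr C_n$ has the same joint law as $(n_1+1,\dots,n_n+1)$, where $(n_1,\dots,n_n)$ are the box counts from $n-2$ balls thrown uniformly and independently into $n$ boxes. Hence $\mathbb P(\Delta(\tree)\le k_n)=\mathbb P(\max_i n_i\le k_n-1)$, and it suffices to estimate the upper tail of the maximum box count in an almost-critical occupancy scheme with mean load $\lambda=(n-2)/n\to1$.

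Let $A_i=\{n_i\ge k_n\}$ and $N=\sum_{i=1}^n\mathbf 1_{A_i}$, so that $\{\Delta(\tree)\le k_n\}=\{N=0\}$. I would apply the Chen-Stein method (or, equivalently, convergence of factorial moments via Bonferroni inclusion-exclusion, in the spirit of~\cite{kolchin-chistyakov;combinatorial-problems}) to show that
\[\mathbb P(N=0)=\exp(-\mu_n)+o(1),\qquad \mu_n=n\,\mathbb P\bigl(\operatorname{Bin}(n-2,1/n)\ge k_n\bigr).\]
Since $k_n=o(\sqrt n)$, the binomial point probability $\binom{n-2}{k_n}n^{-k_n}(1-1/n)^{n-2-k_n}$ equals $e^{-1}/k_n!\cdot(1+o(1))$, and because consecutive point probabilities have ratio $\sim1/(j+1)\to0$ the tail is dominated by its leading term, so $\mu_n\sim n/(e\,k_n!)$. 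Substituting Stirling's formula $\log k_n!=k_n\log k_n-k_n+\tfrac12\log(2\pi k_n)+o(1)$ gives
\[\log\mu_n=\log n-k_n\log k_n+k_n-\tfrac12\log k_n-\log(e\sqrt{2\pi})+o(1),\]
and plugging this into $\mathbb P(\Delta(\tree)\le k_n)=\exp(-\mu_n)+o(1)$ yields the stated formula.

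The main obstacle is the Poisson approximation step. The indicators $\mathbf 1_{A_i}$ are weakly negatively correlated (a heavy box drains balls from the others), and to conclude $N\Rightarrow\operatorname{Poisson}(\mu_n)$ one must show that all factorial moments $\mathbb E[N(N-1)\cdots(N-r+1)]$ agree with $\mu_n^r$ up to $o(1)$ for each fixed $r$, uniformly in the regime where $\mu_n$ is of order one --- precisely the scale $k_n\sim\log n/\log\log n$ relevant here. This reduces to bounding joint tail probabilities $\mathbb P(n_{i_1}\ge k_n,\dots,n_{i_r}\ge k_n)$ for distinct indices, which by the exchangeability and the explicit multinomial formula are again handled by Stirling applied to a mild perturbation of the independent-Poisson approximation; with $k_n=o(\sqrt n)$ these estimates are classical and form the technical core of~\cite{carr-goh-schmutz;degree}.
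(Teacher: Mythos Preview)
Your argument is correct, and there is essentially nothing to compare it against: the paper does not supply its own proof of this theorem. It is stated as a result of Carr, Goh and Schmutz~\cite{carr-goh-schmutz;degree}, and the only step the paper itself spells out is the sentence preceding the theorem, namely the reduction via Proposition~\ref{prop:cayleydegrees} of $\Delta(\tree)-1$ to the maximal box count when $n-2$ balls are dropped into $n$ boxes. You make exactly this reduction and then sketch the Poisson approximation for the maximum occupancy (Chen--Stein / factorial moments, Stirling for $k_n!$) that constitutes the argument in the cited reference; so you have simply filled in what the paper deliberately leaves to~\cite{carr-goh-schmutz;degree}.
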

In this limit, $\Delta(\tree)$ has a continuous limiting distribution. This differs from the parallel result below (Theorem~\ref{thm:polyamaxdeg}) for Pólya trees.

As explained in Drmota~\cite[Example~1.9]{drmota;trees}, random Cayley trees are exactly distributed as a Galton-Watson process wth Poisson birth distribution, conditioned to have $n$ descendents at extinction. This implies that, as random metric spaces for the path metric, $\tree\in\mathscr C_n$ converges to Aldous' continuum random tree. Now, a host of limit theorems for ``global functions'' are available, see~\S\ref{ss:crt} below for details. We remark here that the number of leaves and the maximum degree are not suitably continuous so are not covered by the general theory; see the remark at the end of~\S\ref{ss:crt}.

\subsection{Pólya trees}\label{ss:polyatrees}
The symmetric group $S_n$ acts on labeled trees in $\mathscr C_n$ by permuting their labels, and the subgroup $S_{n-1}=\{\sigma:\sigma(1)=1\}$ acts on labeled trees rooted at $1$. The orbits are rooted, unlabeled trees. If $t_n$ is the number of Pólya trees on $n$ vertices and $t(x)=\sum_{n\ge1}t_n x^n$ is their generating function, Pólya~\cite{polya;chemische} proved the functional equation
\begin{equation}\label{eq:functional}
  t(x) = x \exp(t(x)+\frac12 t(x^2)+\frac13 t(x^3)+\cdots).
\end{equation}

Pólya~\cite{polya;chemische}, followed by Otter~\cite{otter;trees}, used this and delicate singularity analysis to prove the asymptotics~\eqref{eq:polyacount}.

The functional equation and further delicate analysis has been used by the analytic combinatorics community to derive remarkable limiting properties for various features of random Pólya trees. A survey is in~\cite[Section~3.6]{drmota;trees}. In particular, this includes the limiting distribution of the height, profile, and limiting degree distribution~\cite{panagiotou-sinha;vertices;graph}.

Later work includes a remarkable limit theorem for the size of the automorphism group of a random Pólya tree~\cite{olsson-wagner;distribution}: for a random rooted Pólya tree $\tree$,
\[\frac{\log\#\text{Aut}(\tree)-\mu n}{\sqrt n}\Longrightarrow\mathscr N(0,\sigma^2)\]
for $\mu=0.1373423\dots$ and $\sigma^2=0.1967696\dots$; thus ``random trees have exponentially many automorphisms''; this will be useful in section~\ref{sec:algo} when a random automorphism must be chosen.

Many of these developments use an algorithm for generating a random element of a combinatorial class in the presence of a functional equation such as~\eqref{eq:functional}. This is called the ``Boltzmann sampler''. It has been extensively developed by the Flajolet school, in the highly recommended original article~\cite{duchon-flajolet;boltzmann}; see also~\cite{flajolet-fusy-pivoteau} focusing on algorithms for generating unlabeled objects.

The Boltzmann sampler generates objects of random size $N$ and one must tune parameters to get the distribution of $N$ centered about a desired $n$, rejecting samples for which $N\neq n$. The community has introduced a host of techniques to aid this but at present writing much is art; see \S\ref{ss:boltzmann} for a rough speed comparison which is not favourable to the Boltzmann sampler. Indeed, for large $n$ our experiments show that thousands of samples must be discarded before one gets one of the exact degree needed. In our comparisons of data with limits of theoretical predictions, we definitely wanted a source of fixed-size trees.

An important recent development, after Drmota's book, is the finding that, like Cayley trees, a random Pólya tree converges, as a random metric space, to Aldous' continuum random tree. This long open problem was solved by~\cite{haas-miermont-2012} and later sharpened by~\cite{panagiotou-stufler;scaling-polya}. It was known that Pólya trees cannot be seen as Galton-Watson trees (which have Aldous' trees as limits). Panagiotou-Stufler show that a random Pólya tree has a ``spine'' that is a Galton-Watson tree, and is decorated with a collection of ``little trees'' that don't affect convergence. Their well-written paper uses the Boltzmann sampler as a theoretical base. Refinements delineating the size of the ``decorations'' (each about $\mathcal O(\log n)$) are in~\cite{gittenberger-jin-wallner;shape}. Note finally that, for random Pólya trees, \cite{robinson-schwenk;distribution} determine the mean number of leaves. It is asymptotic to $(0.438156\dots)n$ which is is greater than the value $n/e$ from Proposition~\ref{prop:cayleydegrees} above, so already this local feature distinguishes Cayley and Pólya trees.

Another difference between Cayley and Polya trees is seen from~\cite{goh-schmutz;unlabeled}. Theorem~\ref{thm:Delta} above shows that the maximum degree of a random Cayley tree has a continuous extreme value distribution. On the other hand, \cite{goh-schmutz;unlabeled} show:
\begin{thm}\label{thm:polyamaxdeg}
  For a random Pólya tree, the maximal degree is concentrated on at most 3 values.
\end{thm}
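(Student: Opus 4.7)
The plan is to carry out a singularity analysis of the generating function for Pólya trees of bounded maximum degree, and to exploit the very fast convergence of its radius of convergence to the constant $\rho$ of~\eqref{eq:polyacount}. For each $k$, let $T_n^{(k)}$ be the number of rooted Pólya trees on $n$ vertices in which every vertex has degree at most $k$, and set $t^{(k)}(x)=\sum_n T_n^{(k)}x^n$. The Pólya-enumeration argument that yields~\eqref{eq:functional} gives a truncated functional equation: schematically,
\[t^{(k)}(x) \;=\; x\sum_{j=0}^{k}\frac{1}{j!}\sum_{\sigma\in S_j}\prod_{i\ge1}t^{(k)}(x^i)^{c_i(\sigma)},\]
where $c_i(\sigma)$ counts the $i$-cycles of $\sigma$ (with a trivial shift in the bound at non-root vertices). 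Crucially, the right-hand side is a \emph{polynomial} in the finitely many arguments $t^{(k)}(x),t^{(k)}(x^2),\dots,t^{(k)}(x^k)$, so the Otter--Pólya implicit-function framework applies to $t^{(k)}$ exactly as to $t$.

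Next I would show that $t^{(k)}$ has a unique dominant square-root singularity $\rho_k$, producing $T_n^{(k)}\sim c_k\,n^{-3/2}\rho_k^{-n}$ with $c_k\to c:=b\sqrt\rho/(2\sqrt\pi)$, and moreover that $\rho_k\searrow\rho$ at a \emph{super-exponential} rate. This is the heart of the matter: admitting one more child at each vertex introduces new trees whose extra branching is weighted at the singularity by roughly $\rho^k/k!$ (the $\rho^k$ for the $k$ additional children, the $1/k!$ for the Pólya multiset symmetry), and a perturbation of the singular system yields $\rho_k-\rho=\Theta(\rho^k/k!)$.

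Putting the pieces together, for a uniform random Pólya tree $\tree$ on $n$ vertices,
\[\mathbb P\bigl(\Delta(\tree)\le k\bigr)\;=\;\frac{T_n^{(k)}}{T_n}\;\sim\;\frac{c_k}{c}\Bigl(\frac{\rho}{\rho_k}\Bigr)^n\;\sim\;\exp\!\Bigl(-\tfrac{n(\rho_k-\rho)}{\rho}\Bigr).\]
Since $(\rho_{k-1}-\rho)/(\rho_k-\rho)\sim k/\rho\to\infty$, as $k$ crosses the threshold $k^*(n)$ defined by $n(\rho_{k^*}-\rho)\asymp 1$, this exponent drops from $\gg 1$ to $\ll 1$ in essentially one step, giving $\mathbb P(\Delta\le k^*-2)\to 0$ and $\mathbb P(\Delta\le k^*+1)\to 1$. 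Hence the mass of $\Delta(\tree)$ concentrates on the three consecutive values $\{k^*-1,k^*,k^*+1\}$.

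The principal obstacle is the matching lower bound $\rho_k-\rho=\Omega(\rho^k/k!)$: the upper bound follows easily from monotonicity of the truncation, but to preclude a window wider than three one must analyse the perturbation of the singular system $\{F_k=0,\partial_yF_k=0\}$ at the common limit $(\rho,t(\rho))$ and identify the leading correction as coming from a single vertex of degree $k+1$ planted near the root.
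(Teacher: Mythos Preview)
First, note that the paper does not itself prove this theorem: it is quoted from Goh--Schmutz, and the only argument the paper records (in~\S\ref{ss:degrees}) is the asymptotic $\rho_m\approx\rho+c\,\rho^{m+1}$ for the singularity of the degree-bounded generating function, together with the resulting $\mathbb P(\Delta\le m)\approx\exp(-cn\rho^m)$. Your overall plan---singularity analysis of the truncated functional equation for $t^{(k)}$ and comparison of $\rho_k$ with $\rho$---is exactly that route, so at the level of strategy you are on track.

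The proposal breaks down at the key quantitative step. Your claim $\rho_k-\rho=\Theta(\rho^k/k!)$ is wrong; the correct rate, as the paper reports, is \emph{geometric}: $\rho_k-\rho\sim c\,\rho^{k+1}$. The heuristic ``$\rho^k$ for the $k$ additional children, $1/k!$ for the Pólya multiset symmetry'' is the culprit. In the ordinary-generating-function world of unlabeled trees there is no $1/k!$: a root carrying $k$ identical leaf children is \emph{one} tree of weight $x^{k+1}$, not $x^{k+1}/k!$. Analytically, the term you discard when truncating is $\sum_{j>k}Z(S_j;t(\rho),t(\rho^2),\dots)$ with $t(\rho)=1$, and since
\[
\sum_{j\ge0}Z(S_j;1,t(\rho^2),\dots)\,z^j \;=\; \exp\Bigl(z+\sum_{i\ge2}\tfrac{t(\rho^i)}{i}\,z^i\Bigr)
\]
has its dominant singularity at $z=1/\rho$ (because $t(\rho^i)\sim\rho^i$ forces a logarithmic divergence there), one obtains $Z(S_j;1,t(\rho^2),\dots)\sim C\rho^j$, whence the geometric correction to $\rho_k$. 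The $1/k!$ you invoke would be correct for \emph{labeled} (Cayley) trees, which is precisely the case in which the maximum degree does \emph{not} concentrate.

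This error is fatal for your concentration step. With the correct rate, $(\rho_{k-1}-\rho)/(\rho_k-\rho)\to 1/\rho\approx 2.96$, a \emph{constant}, not $\to\infty$; so the sentence ``this exponent drops from $\gg1$ to $\ll1$ in essentially one step'' is simply false. The first-order approximation $\exp(-cn\rho^m)$ by itself spreads non-negligible mass over several consecutive integers, and a three-value concentration cannot be read off from the singularity shift alone in the way you propose.
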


\subsection{The continuum random tree}\label{ss:crt}
As described in the Introduction, there are literally hundreds of flavours of trees, each with natural uniform distribution and associated limit theorems. Drmota's book~\cite{drmota;trees} is a splendid account, with careful proofs and full references. Another superb reference to the continuum random tree is~\cite{le-gall;random-trees-applications}.

Aldous~\cite{aldous;continuum-i,aldous;continuum-ii,aldous;continuum-iii} introduced a kind of Brownian motion on such spaces and shows how many classes of trees and measures have the same limit theory. Thus, a hard won theorem for some specific model holds for many others. He has written a splendid overview of the subject~\cite{aldous;continuum-ii}. His rough motivation follows: a tree has a unique shortest path between two vertices which makes the tree into a metric space. For many models, if two vertices are chosen at random their distance is of order $\sqrt n$. His continuum random tree models these metric spaces of trees in which the mean distance between vertices is of order $\sqrt n$.

The limit object, the ``continuum random tree'' (CRT), in a random, compact metric space. This uses Gromov-Hausdorff's idea that the space of all compact, Polish, metric spaces can itself be made into a metric space, call it $(\mathscr M,d_{GH})$. This is itself a Polish space and the well developed machine of weak convergence is in force.

To describe the limit object, recall that Brownian excursion on $[0,1]$ is just Brownian motion $B_t$ with $B_0=B_1=0$, conditioned to be non-negative. Call this process $E_t$, for $t\in[0,1]$. Use $E_t$ to define a (random) pseudo-metric on $[0,1]$ via
\[d_E(s,t)=E_s+E_t-2\min_{s\le u\le t}E_u.\]
The quotient of $[0,1]$ obtained by identifying points at distance $0$ from each other gives a random metric space $([0,1]/{\sim},d_E)$, the CRT.

Now consider a class of trees $\mathscr T_n$ on $n$ vertices. Each $\tree\in\mathscr T_n$ may be viewed as a metric space, and remains so if the metric is rescaled by dividing it by $\sqrt n$. Choosing $\tree$ uniformly in $\mathscr T_n$ gives a (random) metric space $(\tree,d_\tree/\sqrt n)$.

Aldous shows that for many classes of trees
\[(\tree,d_\tree/\sqrt n)\Longrightarrow ([0,1]/{\sim},d_E).\]
This class includes trees constructed from critical Galton Watson trees with progeny having finite variance (the limiting $E$ is rescaled by $\sigma$), conditioned to have $n$ total progeny.

Since Cayley trees are Galton Watson trees with $\text{Poisson}(1)$ births per generation, random Cayley trees have a CRT limit. Drmota~\cite[Theorems~4.8, 4.11]{drmota;trees} works out the distribution of height and shape of random Cayley trees from this point of view.

For quite a while, it was a mystery: are Pólya trees in the domain of the CRT? Drmota and Gittenberger~\cite[Theorem~1]{drmota-gittenberger;shape-unlabeled} showed that Pólya trees are \emph{not} Galton-Watson and indeed Aldous wrote ``the model ``all unordered unlabelled trees equally likely'' does \emph{not} fit into this set-up, and no simple probabilistic description is known''~\cite[page~29]{aldous;continuum-ii} (though later in the same paper he conjectures that Pólya trees have CRT limits!).

A breakthrough occured in work of Haas and Miermont~\cite{haas-miermont-2012} sharpened by Panagiotou and Stufler\cite{panagiotou-stufler;scaling-polya} and Gittenberger et al~\cite{gittenberger-jin-wallner;shape}. These authors show that a random Pólya tree has a large ``spine'' which is Galton-Watson. The spine is then ``decorated'' with small forests (of size $\Theta(\log n)$) which do not affect convergence to a rescaled CRT. The details are deep, beautiful mathematics, and the account of Panagiotou-Stufler is recommended reading.

From all of this, for ``global functions'', continuous in the Gromov-Hausdorff topology, one expects Cayley and Pólya trees to ``look the same''. The examples in Section~\ref{sec:compare} tell a more nuanced story. Of course, there are any number of ``local features'' where this limit theory doesn't hold and special purpose theory and simulation are the only game in town. We treat some of these in Section~\ref{sec:compare}. As a side note, we add that we have found it difficult to find a reasonable, useful description of the continuous functions on tree space.

\subsection{The Burnside process}\label{ss:burnside}
Counting and sampling problems often occur in the presence of a group action. We begin in a general abstract setting: let $\mathscr X$ be a finite set, and let $G$ be a finite group acting on the right on $\mathscr X$. This divides $\mathscr X$ into disjoint orbits
\[\mathscr X=\mathscr O_1\sqcup\cdots\sqcup\mathscr O_k.\]
Natural questions are:
\begin{itemize}
\item How many orbits are there?
\item What are typical orbit sizes?
\item Do the orbits have ``nice names'', that is, can they be labeled by a convenient coding?
\item How can one choose an orbit uniformly at random?
\end{itemize}
The Burnside process~\cite{goldberg-jerrum;burnside,diaconis;bose-einstein} gives a Markov chain Monte-Carlo approach to the last problem, and this permits progress on the first two problems.

The process runs as follows:
\begin{enumerate}
\item From $x\in\mathscr X$ choose $g\in G$ uniformly in the stabilizer $G_x=\{h\in G:x^h = x\}$;
\item From $g\in G$ choose $x\in\mathscr X$ uniformly in the fixed point set $\mathscr X_g=\{y\in\mathscr X:y^g=y\}$.
\end{enumerate}
In other words, we consider the bipartite graph with vertex set $\mathscr X\sqcup G$ and an edge between $x\in\mathscr X$ and $g\in G$ whenever $x^g=x$, and perform a simple random walk on this graph.

The two steps of the chain go from $x\in\mathscr X$ to $y\in\mathscr X$ with chance
\[K(x,y)=\frac1{\#G_x}\sum_{g\in G_x\cap G_y}\frac1{\#\mathscr X_g}.\]
It is easy to see that this gives an ergodic, reversible Markov chain on $\mathscr X$ with stationary distribution
\[\pi(x)=\frac{k^{-1}}{\#\mathscr O_x},\qquad k=\#\text{orbits}, x\in\mathscr O_x.\]
Thus, running the chain, and simply reporting the current orbit gives a symmetric Markov chain on $\{1,\dots,k\}$ with uniform stationary distribution.

This procedure has been applied when $\mathscr X=G$ with $x^g=g^{-1}x g$, so the orbits are conjugacy classes. When furthermore $G=S_n$, the classes are indexed by partitions of $n$ and the procedure gives a useful way to generate a random partition; see~\cite{diaconis-tung}.

The Burnside process is a close cousin of the celebrated Swedsen Wang algorithm used for Ising simulations; see~\cite{andersen-diaconis} for details. It is expected to converge rapidly. This can be proved in some cases~\cite{diaconis-zhong,rahmani;commuting-chain,paguyo;burnside-partitions} but careful analysis of the running time for complex problems, such as partitions or trees, are open research problems. We report some empirical results in Section~\ref{sec:compare}.

\section{The algorithm}\label{sec:algo}
The implementation of the Burnside process, on rooted trees, requires two basic operations:
\begin{enumerate}
\item Given a labeled rooted tree on $[n]$, select uniformly one of its automorphisms;
\item Given a permutation on $[n]$, select uniformly a tree fixed by that permutation.
\end{enumerate}
We shall describe algorithms that implement efficiently these two steps; but first review and extend a well-known encoding of labeled trees by their \emph{Prüfer codes}. This is needed for the algorithm, but also gives our refinement of Cayley's formula.

\subsection{Prüfer codes with automorphisms, and a refinement of Cayley's formula}\label{ss:counting}
Consider $\sigma\in S_n$, a permutation on $n$ points. We are interested in the set $\mathscr T_\sigma$ of labeled rooted trees $\tree$ on $[n]$ that are invariant under $\sigma$.

The crucial remark is that if $\tree$ is invariant under $\sigma$, then there is an associated tree $\tree/\sigma$ on the cycles of $\sigma$; in which the cycle containing $i$ is below the cycle containing $j$ in $\tree/\sigma$ if and only if $i$ is below $j$ in $\tree$. For example, consider the full binary tree and some quotients:
\[\begin{tikzpicture}[every node/.style={circle,inner sep=0pt,minimum size=4pt,draw}]
    \begin{scope}
      \node[fill,label=right:1] (a) at (0,0) {};
      \node[label=right:2,label=left:{$\tree={}$}] (b0) at (-0.6,-0.6) {};
      \node[label=right:3] (b1) at (0.6,-0.6) {};
      \node[label=right:4] (c00) at (-0.9,-1.2) {};
      \node[label=right:5] (c01) at (-0.3,-1.2) {};
      \node[label=right:6] (c10) at (0.3,-1.2) {};
      \node[label=right:7] (c11) at (0.9,-1.2) {};
      \draw (a) -- (b0) -- (c00) (a) -- (b1) -- (c10) (b0) -- (c01) (b1) -- (c11);
    \end{scope}
    \begin{scope}[xshift=3cm]
      \node[fill,label=right:1] (a) at (0,0) {};
      \node[label=right:2,label=left:{$\frac{\tree}{(4,5)}={}$}] (b0) at (-0.3,-0.6) {};
      \node[label=right:3] (b1) at (0.6,-0.6) {};
      \node[label=left:45] (c0) at (-0.3,-1.2) {};
      \node[label=right:6] (c10) at (0.3,-1.2) {};
      \node[label=right:7] (c11) at (0.9,-1.2) {};
      \draw (a) -- (b0) -- (c0) (a) -- (b1) -- (c10) (b1) -- (c11);
    \end{scope}      
    \begin{scope}[xshift=6.5cm]
      \node[fill,label=right:1] (a) at (0,0) {};
      \node[label=right:2,label=left:{$\frac{\tree}{(4,5)(6,7)}={}$}] (b0) at (-0.3,-0.6) {};
      \node[label=right:3] (b1) at (0.3,-0.6) {};
      \node[label=left:45] (c0) at (-0.3,-1.2) {};
      \node[label=right:67] (c1) at (0.3,-1.2) {};
      \draw (a) -- (b0) -- (c0) (a) -- (b1) -- (c1);
    \end{scope}      
    \begin{scope}[xshift=10cm]
      \node[fill,label=right:1] (a) at (0,0) {};
      \node[label=right:23,label=left:{$\frac{\tree}{(2,3)(4,6)(5,7)}={}$}] (b) at (0,-0.6) {};
      \node[label=left:45] (c0) at (-0.3,-1.2) {};
      \node[label=right:67] (c1) at (0.3,-1.2) {};
      \draw (a) -- (b) -- (c0) (b) -- (c1);
    \end{scope}      
    \begin{scope}[xshift=13.5cm]
      \node[fill,label=right:1] (a) at (0,0) {};
      \node[label=right:23,label=left:{$\frac{\tree}{(2,3)(4,6,5,7)}={}$}] (b) at (0,-0.6) {};
      \node[label=right:4567] (c) at (0,-1.2) {};
      \draw (a) -- (b) -- (c);
    \end{scope}    
  \end{tikzpicture}
\]
  
The number of trees $\tree$ giving rise to the same $\tree/\sigma$ is the product $\pi(\sigma)$ of the cycle lengths of $\sigma$; indeed each vertex in $\tree/\sigma$, labeled by a cycle of length $d$, corresponds to $d$ vertices in $\tree$ in some given cyclic order. There are $d$ ways of matching that cyclic order with the order of the vertex's parent, and all these choices may be made independently. In other words, we think of each such edge in $\tree/\sigma$ as a cable made of $d$ strands of $\tree$; they can be twisted in $d$ ways to recover a possible $\tree$.

On the other hand, not all trees on the set of cycles of $\sigma$ are legal; only those for which if $i$ is below $j$ (our trees grow downwards) then the cycle length at $j$ divides the cycle length at $i$. These preliminary remarks are in fact sufficient to count the number of trees in $\mathscr T_\sigma$.

We now get into specifics. The following is an easy variant of Prüfer codes:
\begin{lem}
  Let $X,Y$ be any sets, and consider $n\in\N$. There is a bijection between the following two objects:
  \begin{enumerate}
  \item Forests on $[n]$ with a choice of one root per tree, a label in $X$ on each edge, and a label in $Y$ on each root;
  \item Sequences in $([n]\times X\cup\{0\}\times Y)^n$ whose last term is in $\{0\}\times Y$.
  \end{enumerate}
\end{lem}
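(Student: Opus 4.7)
The plan is to produce explicit, mutually inverse encoding and decoding maps, directly generalizing the classical Prüfer construction (which is the case $|X|=|Y|=1$, $k=1$).

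\emph{Encoding.} Given a forest $F$ of type (1) on $[n]$, iteratively peel off its smallest ``leaf'', producing a sequence $s_1,\dots,s_n$. At step $k$, let $v_k$ be the smallest vertex of the current remnant forest that has no children left. If $v_k$ still has a parent $p$ in the remnant, set $s_k := (p,x) \in [n]\times X$ where $x$ is the label on the edge $\{v_k,p\}$; otherwise $v_k$ is isolated, hence a root of one of the trees of $F$, and we set $s_k := (0,y) \in \{0\}\times Y$ where $y$ is the root-label of $v_k$. Then remove $v_k$ and continue. The process terminates after exactly $n$ steps, and the very last vertex $v_n$ has no children (it is the only one left in its tree) and no parent (everything above it has been removed), so $s_n \in \{0\}\times Y$ as required.

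\emph{Decoding.} Given a sequence $s = (s_1,\dots,s_n)$ with $s_n \in \{0\}\times Y$, reconstruct $F$ as follows. Build the vertex order $v_1,\dots,v_n$ inductively: $v_k$ is the smallest element of $[n]\setminus\{v_1,\dots,v_{k-1}\}$ that does not occur as a first coordinate of any $s_j$ with $j \ge k$. Then, according to the type of $s_k$, either attach $v_k$ as a child of $p$ with edge-label $x$ (if $s_k = (p,x)$) or declare $v_k$ a root with root-label $y$ (if $s_k = (0,y)$). One checks that at each step at least one candidate for $v_k$ exists, so the procedure is well-defined.

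\emph{Mutual inversion.} The central claim, from which $\Psi\circ\Phi = \mathrm{id}$ and $\Phi\circ\Psi = \mathrm{id}$ both follow by induction on $n$, is the equivalence: a vertex $v\in[n]\setminus\{v_1,\dots,v_{k-1}\}$ has no children in the step-$k$ remnant forest if and only if $v$ does not appear as a first coordinate of any $s_j$ for $j\ge k$. In one direction, any surviving child of $v$ will eventually be removed, at which moment $v$ is recorded as that $s_j$'s first coordinate; conversely, a later occurrence of $v$ as a first coordinate can only arise when some child of $v$ is being removed. Granted this equivalence, the smallest-leaf rule used in $\Phi$ and the ``smallest unseen index'' rule used in $\Psi$ pick the same $v_k$ at every step, so $\Phi$ and $\Psi$ are inverse.

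The main obstacle is precisely this equivalence between ``currently has a child'' and ``appears later in the sequence as a first coordinate''. Once pinned down, the induction (peel off $v_1$ and reduce to a forest of type (1) on the $n-1$-element ground set $[n]\setminus\{v_1\}$, noting that the bijection is purely structural and so is insensitive to relabeling the underlying set) is routine.
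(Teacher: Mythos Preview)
Your proof is correct and follows essentially the same route as the paper: the encoding by iterated removal of the smallest childless vertex is identical, and your decoding rule (smallest not-yet-used vertex absent from the remaining first coordinates) is equivalent to the degree-count heap of the paper's Algorithm~\ref{algo:prufer2tree}. You supply more detail than the paper does---in particular, the key equivalence ``has a surviving child $\Leftrightarrow$ appears later as a first coordinate'' and the pigeonhole check that a candidate $v_k$ always exists---where the paper simply asserts that the two operations are mutual inverses. One small wording issue: your parenthetical ``(everything above it has been removed)'' for why $v_n$ has no parent is misleading; the real reason is that a vertex cannot be removed while it still has a child, so the parent of $v_n$ could never have been removed before $v_n$, forcing $v_n$ to be a root of the original forest.
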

\begin{proof}
  Given a forest, construct a sequence as follows: select the lowest-numbered leaf in it, say $k$. Either $k$ has a parent $j$ in the forest, and a label $x\in X$ on the edge $\{j,k\}$, or $k$ is a root with a label $y\in Y$ on it. In the former case, the first term of the Prüfer sequence is $(j,x)$ while in the second case it is $(0,y)$. Repeat until all $n$ vertices have been removed, and note that the last vertex is perforce a root. We have produced a sequence as desired.

  Conversely, given a sequence, Algorithm~\ref{algo:prufer2tree} constructs a forest, and it is easy to verdify that both operations are mutual inverses.
\end{proof}

For example, consider the following forest:
\[\begin{tikzpicture}[vertex/.style={circle,inner sep=0pt,minimum size=4pt,draw},edge/.style={fill=white,inner sep=1pt,scale=1}]
    \begin{scope}
      \node[vertex,fill,label=right:8,label=above:$y_8$] (a) at (0,0) {};
      \node[vertex,label=right:1] (b) at (0,-1) {};
      \node[vertex,label=left:11] (c) at (-0.5,-2) {};
      \node[vertex,label=right:6] (d) at (0.5,-2) {};
      \draw (a) -- node[edge] {$x_1$} (b) -- node[edge] {$x_{11}$} (c) (b) -- node[edge] {$x_6$} (d);
    \end{scope}
    \begin{scope}[xshift=18mm]
      \node[vertex,fill,label=right:2,label=above:$y_2$] (a) at (0,0) {};
      \node[vertex,label=left:10] (b) at (-0.5,-1) {};
      \node[vertex,label=right:7] (c) at (0.5,-1) {};
      \node[vertex,label=right:5] (d) at (0.5,-2) {};
      \draw (a) -- node[edge] {$x_{10}$} (b) (a) -- node[edge] {$x_7$} (c) -- node[edge] {$x_5$} (d);
    \end{scope}
    \begin{scope}[xshift=36mm]
      \node[vertex,fill,label=right:9,label=above:$y_9$] (a) at (0,0) {};
      \node[vertex,label=right:3] (b) at (0,-1) {};
      \draw (a) -- node[edge] {$x_3$} (b);
    \end{scope}
    \begin{scope}[xshift=45mm]
      \node[vertex,fill,label=right:4,label=above:$y_4$] (a) at (0,0) {};
    \end{scope}
    
  \end{tikzpicture}
\]
The lowest-numbered leaf is $3$, so the sequence starts with $(9,x_3)$, and the edge $\{3,9\}$ is removed. The lowest-numbered leaf is then the root $4$, so the sequence continues with $(0,y_4)$; proceeding, the whole sequence is
\[(9,x_3)\,(0,y_4)\,(7,x_5)\,(1,x_6)\,(2,x_7)\,(0,y_9)\,(2,x_{10})\,(0,y_2)\,(1,x_{11})\,(8,x_1)\,(0,y_8).\]
Starting again from this sequence $\Sigma$, the counts of the numbers, increased by one, are
\[\begin{array}{c|ccccccccccc}
    n & 1 & 2 & 3 & 4 & 5 & 6 & 7 & 8 & 9 & 10 & 11\\ \hline
    \# & 3 & 3 & 1 & 1 & 1 & 1 & 2 & 2 & 2 & 1 & 1
  \end{array}.\]
For example, the numbers $1$ and $2$ each appear twice in $\Sigma$. Adding $1$ gives the $3,3$ that start the second line. The table is used, along with $\sigma$, to reconstruct as follows the original forest.

The first entry of $\Sigma$ is $(9,x_3)$ and the lowest-numbered $1$ in the table above is $3$ so $\{3,9\}$ is selected as an edge labeled $x_3$ while the count of $9$ is decreased to $1$. The second entry of $\Sigma$ is $(0,y_4)$ so $4$ is made a root with label $y_4$; proceeding, the original forest is reconstructed.

\begin{algorithm}
  \caption{A decorated tree from a generalized Prüfer sequence}\label{algo:prufer2tree}
  \begin{algorithmic}[1]
    \Require{$\Sigma$ is a sequence over $\{1,\dots,n\}\times X\cup\{0\}\times Y$, of length $n$, ending in $\{0\}\times Y$.}
    \Statex
    \Function{PrüferDecode}{$\Sigma$}
      \Let{$n$}{$\text{length}(\Sigma)$}
      \Let{$d$}{$(1,\dots,1)$}\Comment{local degrees at $1,\dots,n$}
      \For{$i\gets1,\dots,n$}
        \If{$\Sigma_i=(j,x),\;j>0$}
          \Let{$d_j$}{$d_j+1$}
        \EndIf
      \EndFor
      \Let{$g$}{$\text{empty\_graph}(n)$}
      \Let{$L$}{$\text{sorted\_heap}(\{j\mid d_j=1\})$}\Comment{leaves}
      \For{$i\gets1,\dots,n$}
        \Let{$k$}{$\text{pop\_min}(L)$}
        \Let{$d_k$}{$d_k-1$}
        \If{$\Sigma_i=(j,x),\;j>0$}
          \Let{$g$}{$g\cup\text{edge }(j,k)\text{ labeled }x$}
          \Let{$d_j$}{$d_j-1$}
          \If{$d_j=1$} \text{push $j$ on $L$}\EndIf
        \Else
          \State \text{$\Sigma_i=(0,y)$ and then mark $k$ as root labeled $xy$}
        \EndIf  
      \EndFor
      \State \Return{$g$}
    \EndFunction
  \end{algorithmic}
\end{algorithm}

\begin{cor}
  For any $m,x,y\in\N$, the number $f(m,x,y)$ of labeled forests on $[m]$ with a symbol in $[y]$ at every root and a symbol in $[x]$ at every non-root is given by
  \[f(m,x,y) = (mx+y)^{m-1}y.\]
\end{cor}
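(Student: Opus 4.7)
The plan is to apply the preceding lemma directly, with $X=[x]$ and $Y=[y]$. Under the stated bijection, labeled rooted forests on $[m]$ decorated by a symbol in $[x]$ on each edge and a symbol in $[y]$ on each root correspond to sequences of length $m$ over $[m]\times[x]\sqcup\{0\}\times[y]$ whose last term lies in $\{0\}\times[y]$. So the proof reduces to counting such sequences.

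First I would note the trivial identification between the data of ``a symbol in $[x]$ at every non-root vertex'' and ``a symbol in $[x]$ on every edge'': each non-root has a unique parent edge, so these two decorations are in canonical bijection. This matches the hypothesis of the lemma.

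Next I would simply multiply the per-position counts. Each of the first $m-1$ entries of the Prüfer sequence is either of the form $(j,x')$ with $j\in[m]$ and $x'\in[x]$, contributing $mx$ possibilities, or of the form $(0,y')$ with $y'\in[y]$, contributing $y$ possibilities, for a total of $mx+y$ choices per position. The final entry is constrained to be in $\{0\}\times[y]$, contributing $y$ choices. Multiplying gives $(mx+y)^{m-1}y$, which is the claimed formula.

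There is no real obstacle here: the substantive content lives in the lemma and in Algorithm~\ref{algo:prufer2tree}. The only small point worth checking is that the ``last term in $\{0\}\times Y$'' constraint is genuinely the only global constraint on the sequence (so that positions are independent), which is immediate from the way the Prüfer encoding always removes the unique remaining root last.
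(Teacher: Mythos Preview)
Your proof is correct and follows the same approach as the paper: identify non-root labels with edge labels via the parent-edge bijection, then apply the lemma with $X=[x]$, $Y=[y]$ and count the resulting sequences. The paper's proof is far terser---it records only the edge/non-root bijection and leaves the sequence count implicit---but the substance is identical.
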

Note that one recovers in particular Cayley's count $\tfrac{\partial f}{\partial y}f(m,1,0)=m^{m-1}$ of rooted trees on $[m]$.
\begin{proof}
  There is a bijection between edges and non-root vertices, in which every edge is paired with its endpoint furthest from the root.
\end{proof}

\noindent The next theorem is our extension of Cayley's formula.
\begin{thm}\label{thm:count}
  Let $\sigma$ be a permutation in $S_n$ fixing $1$ and with $\lambda_d$ cycles of length $d$; so $\lambda\vdash n$. Furthermore set $\mu_d = \sum_{e | d, e<d} e\lambda_e$. Then the number of $1$-rooted labeled trees on $[n]$ that are $\sigma$-invariant is
  \[t_{n,\sigma}=\lambda_1^{\lambda_1-2}\cdot\prod_{d\ge2,\lambda_d\neq0} f(\lambda_d,d,\mu_d).\]
\end{thm}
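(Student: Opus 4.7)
The strategy is to show that a $\sigma$-invariant $1$-rooted tree $\tree$ on $[n]$ decomposes canonically into (i) a Cayley tree on the $\lambda_1$ fixed points of $\sigma$ rooted at $1$, and (ii) for each $d\ge 2$, a decorated forest on the $\lambda_d$ cycles of length $d$. These pieces turn out to be chosen independently, so their counts multiply.

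For (i), observe that if $i$ is a fixed point whose parent in $\tree$ is $j$, then $\sigma$-invariance gives $j=\text{parent}(\sigma i)=\sigma j$, so $j$ is fixed too. Hence the fixed points span a subtree of $\tree$ rooted at $1$, and conversely any rooted tree on $\lambda_1$ vertices can arise this way, contributing $\lambda_1^{\lambda_1-2}$ by Cayley's formula.

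For (ii), for each cycle $c$ of length $d\ge 2$ the set of parents $\{\text{parent}(i):i\in c\}$ is a $\sigma$-orbit, hence a single cycle $p(c)$ of length $e$ dividing $d$. Restricting $\tree/\sigma$ to the $d$-cycles and keeping only edges $\{c,p(c)\}$ with $|p(c)|=d$ yields a forest $F_d$ whose roots are precisely the $d$-cycles $c$ with $|p(c)|<d$. I then claim $\tree$ is fully determined by the data
\begin{itemize}
\item the Cayley subtree on the fixed points;
\item the forests $F_d$ for $d\ge 2$, together with, for each internal edge of $F_d$, a ``twist'' in $[d]$, and, for each root of $F_d$, a choice of external parent cycle of length $e\mid d$ with $e<d$ plus a twist in $[e]$.
\end{itemize}
The reason is that a $\sigma$-equivariant attachment of a $d$-cycle to an $e$-cycle (with $e\mid d$) is determined by the image of one representative, giving exactly $e$ possible twists; summing $e\lambda_e$ over $e\mid d$, $e<d$ gives $\mu_d$ external slots per root of $F_d$, and $d$ possible twists per internal edge.

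The corollary to the Prüfer-forest lemma, applied with $m=\lambda_d$, $x=d$, $y=\mu_d$, then counts the number of such decorated forests as $f(\lambda_d,d,\mu_d)$. The choices for different $d$ are independent because the external attachments of $F_d$ refer only to cycles of strictly smaller length, which are determined by earlier pieces of the decomposition. Multiplying the Cayley factor by $\prod_{d\ge 2,\lambda_d\ne0}f(\lambda_d,d,\mu_d)$ yields the theorem. The main delicacy is the twist accounting in (ii): verifying that a $\sigma$-equivariant edge between a $d$-cycle and an $e$-cycle carries exactly $e$ degrees of freedom, and that these choices combine cleanly to match the decorated-forest framework so that the corollary may be invoked directly.
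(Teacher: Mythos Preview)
Your proposal is correct and follows essentially the same approach as the paper's proof. The paper phrases the argument as an induction on the maximal cycle length of $\sigma$, peeling off the longest cycles at each step, while you present it as a one-shot product decomposition over all cycle lengths; unrolling the paper's induction gives exactly your decomposition, with the same decorated-forest count $f(\lambda_d,d,\mu_d)$ at each level and the same twist accounting (the paper states the $d$ twists per edge in the discussion of $\tree/\sigma$ preceding the theorem, and the $\mu_d$ root-attachment choices in the inductive step).
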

\begin{proof}
  If $\sigma$ is the identity, this reduces to the classical Cayley enumeration formula. We proceed by induction on the maximal cycle length $d$ of $\sigma$. Let there be then $\lambda_d>0$ cycles of length $d$ in $\sigma$, and let $\sigma'$ be the permutation from which these cycles were removed.

  Every $\sigma$-invariant tree may then be produced as follows: choose first a $\sigma'$-invariant tree $T'$ on the complement in $[n]$ of the $d$-cycles. Choose then a forest $F$ on $[\lambda_d]$, with a label in $[d]$ on each non-root and a label in $[\mu_d]$ on each root. Duplicate each tree $d$ times in $F$, to support it on the union of the $d$-cycles, by choosing for each non-root vertex $\in[\lambda_d]$ a point in that cycle. Attach then each root to any $[\mu_d]$ vertices of $T'$ whose cycle length divides $d$. We have exhausted all the choices and decorations of $F$ to produce once every possible $\sigma$-invariant tree; so $t_{n,\sigma} = t_{n-d\lambda_d,\sigma'} f(d\lambda_d,d,\mu_d)$ and the claim follows by induction.
\end{proof}

Theorem~\ref{thm:count} bears some similarity to work done for phylogenetic trees~\cite{billey-konvalinka-matsen;trees,fusy;symmetries,matsen;tanglegrams}. Recall that a phylogenetic tree is a rooted, leaf labeled, binary tree with $n$ terminal nodes. The symmetric group acts on these trees and the orbits are `tree shapes'. The Burnside process applies here and gives a method for choosing a uniformly distributed tree shape. As part of their work, the above authors derived a nice formula for the number of phylogenetic trees fixed by a permutation $\sigma$. Call this number $A(n,\sigma)$. Supppose that $\sigma$ has cycles of length $\lambda_1,\lambda_2,\dots,\lambda_\ell$ in decreasing order. They give
\[A(n,\sigma) = \prod_{i=2}^\ell (2(\lambda_i +\cdots+\lambda_\ell)-1).\]
Note that taking $\sigma=\text{id}$ gives $\lambda_1=\dots=\lambda_n=1$ and recovers for $A(n,\text{id})$ the classical count $(2n-3)!!$ of phylogenetic trees. Neither formula is a consequence of the other, but they both admit a reasonably simple product form because the fixed subtrees under powers of $\sigma$ form an increasing exhaustion of the tree. Note also that in the case of these (binary) phylogenetic trees all cycle lengths are a power of $2$, so there are polynomially many conjugacy classes of $\sigma$ to consider, so the Burnside process can be implemented on a polynomial-sized stateset.

There is a parallel development in studying the action of the symmetric group on the set of functions from $[n]$ to itself, see~\cite{constantineau-labelle;fixed,meir-moon;random-mapping-patterns,mutafchiev;random-mapping-patterns}. Formulas similar to Theorem~\ref{thm:count} are obtained by these authors; for example, the number of functions $[n]\to[n]$ that are fixed by $\sigma\in S_n$ --- namely, commute with $\sigma$ --- is
\[\prod_{i=1}^n(\mu_d)^{\lambda_d}\]
in the notation of Theorem~\ref{thm:count}; that formula can be proven by similar arguments to ours, and fits nicely with Joyal's proof of Cayley's formula establishing a bijection between bi-rooted trees and self-maps of $[n]$, see~\cite[page~236]{aigner-ziegler;book}.

\subsection{From a labeled tree to a permutation fixing it}
The problem of computing the automorphism group of a graph is a long-standing one, and has been studied extensively, both theoretically~\cite{babai;group-graphs-algorithms} and practically~\cite{mckay-piperno,anders-schweitzer;dejavu}. If the automorphism group has been ``computed'' in a sufficiently convenient format, it is usually straightforward to produce a uniformly random element from the group.

For practical purposes, and for the applications and tests we have in mind (with $n\approx 10^6$), much more efficient procedures are necessary.

Edmonds~\cite[\S6-21]{busacker-saaty;finite-graphs} gives a procedure to test isomorphism of rooted trees, based on canonical labelings of vertices. Mathematically, it may be described as follows: label each vertex with the nested list consisting, in lexicographically increasing order, of the labels of its children. Thus leaves are labeled by the empty list, and the four non-isomorphic unlabeled rooted trees from Figure~\ref{fig:unlabeled} have root respectively labeled $(((())))$, $(((),()))$, $((),(()))$ and $((),(),())$. Two trees are isomorphic if and only if their root labels are equal.

This algorithm is analyzed in detail in~\cite[Theorem~3.3]{aho-hopcroft-ullman;design} where it is shown to run in linear time, as an application of bucket sorting. The point is that, in decreasing order of height, the nested lists can be replaced by integers: start by putting $0$ at each leaf; then proceed in decreasing order of height, first labeling vertices by lists of integers and then sorting the set of these lists and replacing them by their index in the sorted set. These integers are called \emph{i-numbers}, and two trees are isomorphic if and onlyf if they have, at each level, the same sets of lists of i-numbers. Again for the trees from Figure~\ref{fig:unlabeled}, the i-numbers are

\centerline{\begin{tikzpicture}[every node/.style={circle,inner sep=0pt,minimum size=4pt,draw}]
    \begin{scope}[xshift=0cm]\treeA{1=(1)}{1=(1)}{1=(0)}{0}\end{scope}
    \begin{scope}[xshift=2.5cm]\treeB{1=(1)}{1=(0,0)}{0}{0}\end{scope}
    \begin{scope}[xshift=5cm]\treeC{1=(0,1)}{0}{1=(0)}{0}\end{scope}
    \begin{scope}[xshift=7.5cm]\treeD{1=(0,0,0)}{0}{0}{0}\end{scope}
  \end{tikzpicture}}

Colbourn and Booth show that a small modification of the algorithm gives the automorphism group of the tree, also in linear time. (Here some care is necessary: the automorphism group is given by a collection of generators; it is crucial to show that there are at most $n-1$ generators, and that each of them can be given in a very compact format, as a permutation for which only the moved points are stored.) We content ourselves with an intermediate notion: there is a linear-time algorithm that computes the \emph{automorphism partition}, namely the partition of the vertex set $[n]$ into orbits of the symmetric group. It is based on the following
\begin{lem}[Colbourn and Booth~{\cite[Lemma~2.1]{colbourn-booth;trees}}]
  Two vertices $v,w$ of a tree are in the same orbit of its automorphism group if and only if the list of i-numbers from the root to $v$ coincides with the list of i-numbers from the root to $w$.
\end{lem}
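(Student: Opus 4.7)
The forward direction is immediate. If $\sigma \in \mathrm{Aut}(T)$ satisfies $\sigma(v) = w$, then $\sigma$ fixes the root (automorphisms of a rooted tree preserve depth) and is an isometry, so it sends the unique root-to-$v$ path $r=v_0,v_1,\dots,v_k = v$ bijectively onto the unique root-to-$w$ path $r=w_0,w_1,\dots,w_k=w$ with $\sigma(v_i)=w_i$. Since $\sigma$ carries the subtree rooted at $v_i$ isomorphically onto the subtree rooted at $w_i$, and i-numbers were defined precisely so that two vertices share an i-number iff their rooted subtrees are isomorphic, the two lists of i-numbers coincide.

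For the converse, I would proceed by induction on $i = 0,1,\dots,k$, constructing automorphisms $\sigma_0 = \mathrm{id}, \sigma_1, \dots, \sigma_k$ of $T$ with the invariant that $\sigma_i(v_j) = w_j$ for all $j \le i$. Then $\sigma := \sigma_k$ sends $v$ to $w$. At step $i$, set $u := \sigma_{i-1}(v_i)$. Because $\sigma_{i-1}$ is an automorphism, $u$ is a child of $\sigma_{i-1}(v_{i-1}) = w_{i-1}$ sharing its i-number with $v_i$; by the hypothesis on the paths, this common value equals the i-number of $w_i$. Hence $u$ and $w_i$ are two (possibly equal) children of $w_{i-1}$ whose rooted subtrees $T(u)$ and $T(w_i)$ are isomorphic. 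Fix an isomorphism $\alpha\colon T(u)\to T(w_i)$ (which automatically sends the root $u$ to the root $w_i$), and define the ``sibling swap'' $\tau$ to act as $\alpha$ on $T(u)$, as $\alpha^{-1}$ on $T(w_i)$, and as the identity elsewhere. Since $T(u)$ and $T(w_i)$ are either equal or vertex-disjoint sibling subtrees, $\tau$ is a well-defined element of $\mathrm{Aut}(T)$. Setting $\sigma_i := \tau\circ\sigma_{i-1}$ gives $\sigma_i(v_i)=\tau(u)=w_i$; and for $j<i$, the vertex $w_j$ is a strict ancestor of $w_{i-1}$, hence lies outside both subtrees, so $\tau(w_j)=w_j$ and the earlier part of the invariant is preserved.

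The only point that takes care is checking that the swap $\tau$ is genuinely an automorphism and does not disturb the already-placed values $w_0,\dots,w_{i-1}$; this is precisely what the depth bookkeeping above provides. Everything else is mechanical: the matching i-number at level $i$ furnishes the required isomorphism $\alpha$, and the levels above the swap are untouched. I do not foresee any subtle obstacle beyond being careful to observe that $w_j$ for $j<i$ lies strictly above level $i$ and is therefore fixed by $\tau$, so that the induction closes and yields the desired automorphism carrying $v$ to $w$.
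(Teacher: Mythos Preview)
Your argument is sound. Note, however, that the paper itself does not prove this lemma: it is simply quoted from Colbourn and Booth with a citation, so there is no ``paper's own proof'' against which to compare your approach. What you wrote is essentially the standard argument one would expect.

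Two cosmetic points worth tightening. First, the sentence ``for $j<i$, the vertex $w_j$ is a strict ancestor of $w_{i-1}$'' is false at $j=i-1$; what you actually need, and what suffices, is that every $w_j$ with $j<i$ has depth at most $i-1$ and therefore lies outside both $T(u)$ and $T(w_i)$, whose vertices all have depth $\ge i$. Second, in the degenerate case $u=w_i$ your prescription ``act as $\alpha$ on $T(u)$, as $\alpha^{-1}$ on $T(w_i)$'' is only well defined when $\alpha$ is an involution; simply declare $\tau=\mathrm{id}$ in that case. Neither issue affects the validity of the induction.
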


Once the orbit partition of a tree's vertex set is known, there is a linear-time algorithm that produces a uniformly random permutation fixing the tree:
\begin{algorithm}
  \caption{A uniformly random permutation of $[n]$ preserving a tree on $[n]$}\label{algo:tree2perm}
  \begin{algorithmic}[1]
    \Require{$\tree$ is a tree on $[n]$ rooted at $1$, and $\sim$ is its automorphism partition.}
    \Statex
    \Function{Recurse}{$f,u$}\Comment{fill in $f$ on subtree at $u$}
        \For{$v\gets\text{descendants}(u)$}
          \Let{$N$}{$\{w\in\text{descendants}(f[u]) : w\sim v\text{ and }w\notin f\}$}
          \Let{$w$}{$\textsc{UniformRandom}(N)$}
          \Let{$f[v]$}{$w$}
          \State $\textsc{Recurse}(f,v)$
        \EndFor
    \EndFunction

    \Statex
    \Function{UniformPermutation}{$\tree$}
      \Let{$f$}{$(1,0,\dots,0)$}\Comment{the permutation, to be filled in}
      \State $\textsc{Recurse}(f,1)$
      \State \Return{$n\mapsto f[n]$ as a permutation}
    \EndFunction      
  \end{algorithmic}
\end{algorithm}

The validity of Algorithm~\ref{algo:tree2perm} comes from the simple description of the automorphism group of a rooted tree: consider a rooted tree $\tree$ with subtrees $\tree_1,\dots,\tree_d$ immediately attached to the root, and let $\lambda$ denote the partition of $[d]$ in which $i,j$ are in the same part precisely when $\tree_i,\tree_j$ are isomorphic. For each class $C\in\lambda$, choose an element $i(C)$ in it. Then
\[\text{Aut}(\tree)=\bigg(\prod_{i\in[d]}\text{Aut}(\tree_i)\bigg)\rtimes\bigg(\prod_{C\in\lambda}S_C\bigg);\]
namely, trees in each class $C$ may be interchanged arbitrarily, and may also be acted upon by their own automorphism group. Thus a uniformly random element of $\text{Aut}(\tree)$ may be obtained by selecting uniformly elements of the symmetric groups $S_C$ above, and then recursively selecting uniformly random elements of $\text{Aut}(\tree_{i(C)})$. The resulting permutation of $[n]$ is obtained by composing these operations, as done in lines~3--5 of Algorithm~\ref{algo:tree2perm} in which the image of a descendant is chosen so as to be the descendant of the already-chosen image, and has not yet been allocated in the image of the permutation.

In practice, we only implemented Algorithm~\ref{algo:tree2perm}, and did not implement the i-numbering algorithm, since there are very fast packages that compute the automorphism partition in close to linear time. The most efficient one we found is available at \url{https://automorphisms.org}, based on~\cite{anders-schweitzer-stiess}.

Note that, when it turns to practical and not asymptotic considerations, most graph manipulation libraries are ill-suited for large trees because of memory allocation overhead. The best data structure, to store a forest on $[n]$, consists of three integer lists of size $n$, pointing to the parent ($0$ for roots), the firstborn ($0$ for leaves) and the next sibling ($0$ for benjamins) of each vertex.

\subsection{From a permutation to a tree fixed by it}
Consider a permutation $\sigma$ of $[n]$, and a tree $T$ that is invariant under $\sigma$. For every $d\in[n]$, the restriction of $T$ to the $d$-cycles of $\sigma$ is a forest $F_d$ in which each tree appears $d$ times isomorphically. Furthermore, these $d$-uples of trees are grafted on the subtree $T_{|d}$ of $T$ spanned by the forests $F_e$ for $e|d$. We may thus construct $T$ iteratively, by going through all $d$ in increasing order, and selecting uniformly a forest $F_d$. In fact, to take into account the symmetry, we proceed as follows. Let $C_d$ be the $d$-cycles of $\sigma$, let $n_d$ be the number of $d$-cycles, and let $v_d$ be the number of vertices of $T_{|d}$. We construct a forest $F'$ on $n_d$, with a decoration in $[d]$ on each edge, and a decoration in $[v_d]$ on each root. The edge decorations determine how an edge in $F'$ may give rise to $d$ edges in $F_d$, while the root decorations determine how the root in $F'$ may be attached to a vertex in $F_{|d}$; see Algorithm~\ref{algo:perm2tree}.

\begin{algorithm}
  \caption{A uniform tree on $[n]$ invariant under a permutation of $[n]$}\label{algo:perm2tree}
  \begin{algorithmic}[1]
    \Require{$\sigma$ is a permutation of $[n]$ fixing $1$.}
    \Statex
    \Function{InvariantTree}{$\sigma$}
    \Let{$S_1$}{fixed points of $\sigma$}
    \Let{$p$}{$(\text{rand}([\#S_1]):i=1,\dots,\#S_1-2)$}
    \Let{$T$}{$\textsc{PrüferDecode}(p)$, recoded from $[\#S_1]$ to $S_1$}

    \For{$d\gets2,\dots,n$}
      \Let{$V_d$}{$\{(s,j)\mid s\in S_e,e<d,e|d,j\in s\}$}\Comment{where $F_d$ can be grafted}
      \Let{$S_d$}{$d$-cycles of $\sigma$}\Comment{$S_d$ is a list of lists}
      \Let{$W$}{$\{(i,j)\mid 1\le i\le\#S_d,1\le j\le d\}\cup\{(0,j)\mid 1\le j\le\#V_d\}$}
      \Let{$p$}{$(\text{rand}(W):i=1,\dots,\#S_d-1;(0,\text{rand}([\#V_d])))$}
      \Let{$F'$}{$\textsc{PrüferDecode}(p)$}
      \Let{$F_d$}{$\text{emptygraph}(n)$}
      \For{$(i,j,x)\gets\text{edges}(F')$}\Comment{$x$ is an edge label}
        \For{$k\gets1,\dots,d$}
          \Let{$F_d$}{$F_d\cup(S_d[i][k],S_d[j][(k+x)\bmod d])$}
        \EndFor
      \EndFor
      \For{$(r,y)\gets\text{roots}(F')$}\Comment{$y$ is a root label}
        \Let{$(s,j)$}{$V_d[y]$}
        \For{$k\gets1,\dots,d$}
          \Let{$F_d$}{$F_d\cup(S_d[r][k],s[(k+j)\bmod \#s])$}
        \EndFor
      \EndFor
      \Let{$T$}{$T\cup F_d$}
    \EndFor
    \State \Return{$g$}
  \EndFunction
\end{algorithmic}
\end{algorithm}

Note that the total number of random choices Algorithm~\ref{algo:perm2tree} may make coincides with the total number of trees given by Theorem~\ref{thm:count}.

Here is an example to illustrate the algorithm. Consider the permutation $\sigma=(5,6)(7,8)(9,10)(11,12,13)(14,15,16,17,18,19)(20,21,22,23,24,25)$ in $S_{25}$. To construct an invariant tree, we first choose uniformly a tree on the fixed points of $\sigma$, say\\
\centerline{\tikz[every node/.style={circle,inner sep=0pt,minimum size=4pt,draw}]{\treeB1423}.}
Then we consider the three $2$-cycles, and choose a forest on $\{\{5,6\},\{7,8\},\{9,10\}\}$, with an element of $\mathbb Z/2\mathbb Z$ on each edge and an element of the fixed-point-set $[4]$ at each root; for example,\\
\centerline{\begin{tikzpicture}[vertex/.style={circle,inner sep=0pt,minimum size=4pt,draw}]
    \node[vertex,fill,label=right:{$\{5,6\}$},label=left:4] (a) at (0,0) {};
  \node[vertex,label=right:{$\{7,8\}$}] (b) at (0,-1) {};
  \node[vertex,fill,label=right:{$\{9,10\}$.},label=left:1] (c) at (3,0) {};
  \draw (a) -- node[fill=white,inner sep=2pt] {$1$} (b);
\end{tikzpicture}}
We graft the double of this forest onto the previous tree, attaching each root by an edge to the indicated vertex, and matching $\{5,6\}$ to $\{7,8\}$ by a shift of $1$, namely $5$ to $8$ and $6$ to $7$:
\centerline{\begin{tikzpicture}[vertex/.style={circle,inner sep=0pt,minimum size=4pt,draw}]
    \node[vertex,fill,label=right:1] (a) at (0.5,0) {};
    \node[vertex,label=right:4] (b) at (0,-0.6) {};
    \node[vertex,label=right:9] (b0) at (0.7,-0.6) {};
    \node[vertex,label=right:10] (b1) at (1.4,-0.6) {};
    \node[vertex,label=right:2] (c0) at (-1.2,-1.2) {};
    \node[vertex,label=right:3] (c1) at (-0.4,-1.2) {};
    \node[vertex,label=right:5] (c2) at (0.4,-1.2) {};
    \node[vertex,label=right:6] (c3) at (1.2,-1.2) {};
    \node[vertex,label=right:8] (d2) at (0.4,-1.8) {};
    \node[vertex,label=right:7] (d3) at (1.2,-1.8) {};
    \draw (a) -- (b) -- (c0) (b) -- (c1) (b) -- (c2) -- (d2) (b) -- (c3) -- (d3) (a) -- (b0) (a) -- (b1);
  \end{tikzpicture}}
For the $3$-cycle, there is a unique choice of forest, namely a single root, again with a label in $[4]$ at the root, say $2$; grafting the tripling of that vertex at $2$ gives the tree\\
\centerline{\begin{tikzpicture}[vertex/.style={circle,inner sep=0pt,minimum size=4pt,draw}]
    \node[vertex,fill,label=right:1] (a) at (0.5,0) {};
    \node[vertex,label=right:4] (b) at (0,-0.6) {};
    \node[vertex,label=right:9] (b0) at (0.7,-0.6) {};
    \node[vertex,label=right:10] (b1) at (1.4,-0.6) {};
    \node[vertex,label=right:2] (c0) at (-1.2,-1.2) {};
    \node[vertex,label=right:3] (c1) at (-0.4,-1.2) {};
    \node[vertex,label=right:5] (c2) at (0.4,-1.2) {};
    \node[vertex,label=right:6] (c3) at (1.2,-1.2) {};
    \node[vertex,label=right:8] (d2) at (0.4,-1.8) {};
    \node[vertex,label=right:7.] (d3) at (1.2,-1.8) {};
    \node[vertex,label=right:11] (e0) at (-2,-1.8) {};
    \node[vertex,label=right:12] (e1) at (-1.2,-1.8) {};
    \node[vertex,label=right:13] (e2) at (-0.4,-1.8) {};
    \draw (a) -- (b) -- (c0) (b) -- (c1) (b) -- (c2) -- (d2) (b) -- (c3) -- (d3) (a) -- (b0) (a) -- (b1) (c0) -- (e0) (c0) -- (e1) (c0) -- (e2);
  \end{tikzpicture}}
Finally there are two $6$-cycles, for which we must choose a forest on them (say the one-edge tree), a label for the root among the points of order dividing $6$, namely $[11]$ (say $10$) and an element in $\Z/6\Z$ on the edge (say $2$). Grafting six copies of that edge at $10$, and matching the $6$-cycles with a rotation of $2$, produces the tree
\begin{equation}\label{eq:tree25}
  \begin{tikzpicture}[vertex/.style={circle,inner sep=0pt,minimum size=4pt,draw}]
    \node[vertex,fill,label=above right:1] (a) at (0.5,0) {};
    \node[vertex,label=right:4] (b) at (0,-0.6) {};
    \node[vertex,label=below left:9] (b0) at (3.2,-0.6) {};
    \node[vertex,label=above right:10] (b1) at (4.8,-0.6) {};
    \node[vertex,label=right:2] (c0) at (-1.2,-1.2) {};
    \node[vertex,label=right:3] (c1) at (-0.4,-1.2) {};
    \node[vertex,label=right:5] (c2) at (0.4,-1.2) {};
    \node[vertex,label=right:6] (c3) at (1.2,-1.2) {};
    \node[vertex,label=right:8] (d2) at (0.4,-1.8) {};
    \node[vertex,label=right:7] (d3) at (1.2,-1.8) {};
    \node[vertex,label=right:11] (e0) at (-2,-1.8) {};
    \node[vertex,label=right:12] (e1) at (-1.2,-1.8) {};
    \node[vertex,label=right:13] (e2) at (-0.4,-1.8) {};
    \draw (a) -- (b) -- (c0) (b) -- (c1) (b) -- (c2) -- (d2) (b) -- (c3) -- (d3) (a) -- (b0) (a) -- (b1) (c0) -- (e0) (c0) -- (e1) (c0) -- (e2);
    \foreach\i in {0,1,2,3,4,5} {
      \pgfmathparse{int(14+\i)}
      \node[vertex,label=right:\pgfmathresult] (f\i) at (2+0.8*\i,-1.2) {};
      \pgfmathparse{int(22+mod(2+\i,6))}
      \node[vertex,label=right:{\pgfmathresult\ifnum\i=5.\fi}] (g\i) at (2+0.8*\i,-1.8) {};
      \pgfmathparse{int(mod(1+\i,2))}
      \draw (b\pgfmathresult) -- (f\i) -- (g\i);      
    }
  \end{tikzpicture}
\end{equation}

\subsection{$\sigma$-Prüfer sequences}
Here is a more extensive discussion of the encoding of trees with symmetries by a Prüfer sequence.

We consider a permutation $\sigma$ of $[n]$ fixing $1$. Say its number of $d$-cycles is $\lambda_d$ as above, so $\lambda$ is a partition of $n$. Let $\Lambda_d$ be the union of $\sigma$-cycles of length dividing $d$, namely those $i\in[n]$ with $\sigma^d(i)=i$, and let $\Lambda'_d$ be the subset of $\Lambda_d$ consisting of points of order strictly dividing $d$.

A \emph{$\sigma$-Prüfer sequence} is a sequence of numbers in $[n]$ of length $(\lambda_1-1)+\lambda_2+\dots+\lambda_n$, namely one less than the number of cycles of $\sigma$, subject to the following restriction. The sequence is grouped in blocks as in the sum above, so there is one block of numbers for each cycle length. The entries in the block of cycles of length $d$ have to belong to $\Lambda_d$, and the last entry in each such block has furthermore to belong to $\Lambda'_d$, with the convention that $\Lambda'_1=\{1\}$.
\begin{prop}
  There is a bijection between $\sigma$-Prüfer sequences and trees on $[n]$, rooted at $1$ and $\sigma$-invariant.\qed
\end{prop}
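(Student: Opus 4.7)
The plan is to iteratively decompose a $\sigma$-invariant tree by cycle length and apply the generalized Prüfer bijection of \S\ref{ss:counting} at each step. Since $\sigma$ preserves the root and a tree automorphism cannot invert an edge, every edge $\{v,p(v)\}$ of $T$ has $\sigma$-orbit of size equal to the $\sigma$-order of $v$, forcing (as noted in \S\ref{ss:counting}) the $\sigma$-order of $p(v)$ to divide that of $v$. Consequently $\Lambda_d$ is closed under taking parents, the subgraph $T|_d$ of $T$ induced on $\Lambda_d$ is a subtree containing $1$, and $T$ can be reconstructed by processing the cycle lengths $d = 1, 2, \dots$ in order, at each stage attaching to the already-built subtree on $\Lambda'_d$ a forest $F_d$ comprising the $d$-cycles together with their incident edges.

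For each $d \ge 2$, the forest $F_d$ is $\sigma$-invariant and therefore determined, up to $\sigma$-action, by a quotient forest $\bar F_d$ on the $\lambda_d$ cycles of length $d$, decorated by a twist in $[d]$ on each edge and by an attachment label in $[\mu_d]$ on each root. These multiplicities arise from the diagonal $\sigma$-action on a pair of cycles of lengths $d$ and $e$ with $e \mid d$: its orbit size is $\mathrm{lcm}(d,e) = d$, so there are $de/d = e$ inequivalent $\sigma$-invariant attachments between them; summing $e$ over all cycles in $\Lambda'_d$ yields exactly $\mu_d$ possible root labels, while two $d$-cycles admit $d^2/d = d$ distinct twist classes. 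The generalized Prüfer lemma of \S\ref{ss:counting} then puts $\bar F_d$ in bijection with sequences of length $\lambda_d$ in $\bigl([\lambda_d]\times[d] \cup \{0\}\times[\mu_d]\bigr)^{\lambda_d}$ whose last term lies in $\{0\}\times[\mu_d]$. After fixing a representative in each $d$-cycle, the natural identifications $[\lambda_d]\times[d] \leftrightarrow \Lambda_d \setminus \Lambda'_d$ (cycle index together with intra-cycle position) and $[\mu_d] \leftrightarrow \Lambda'_d$ recast this sequence as precisely the $d$-block of a $\sigma$-Prüfer sequence.

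The base case $d = 1$ is the classical one: $T|_1$ is a labeled tree on the $\lambda_1$ fixed points rooted at $1$, encoded by its ordinary Prüfer sequence of length $\lambda_1 - 2$ in $\Lambda_1$, to which one appends the forced terminal symbol $1 \in \Lambda'_1 = \{1\}$ to obtain a block of length $\lambda_1 - 1$ in the prescribed format. Concatenating the blocks over all $d$ yields the map, and inverting it level by level (classical Prüfer decoding for $d = 1$ and Algorithm~\ref{algo:prufer2tree} with the chosen representatives for $d \ge 2$) recovers $T$. The main obstacle is the verification in the middle paragraph: one must check that distinct decorations on $\bar F_d$ always produce distinct $\sigma$-invariant trees, not merely that the counts match, so that the tree-to-sequence and sequence-to-tree maps are genuine mutual inverses. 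As a sanity check, the total count $\lambda_1^{\lambda_1 - 2}\prod_{d \ge 2}(d\lambda_d + \mu_d)^{\lambda_d - 1}\mu_d$ of $\sigma$-Prüfer sequences agrees with the formula of Theorem~\ref{thm:count}.
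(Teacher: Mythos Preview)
Your proposal is correct and follows essentially the same route as the paper: the paper's own ``proof'' is just the remark that the proposition is a reformulation of Theorem~\ref{thm:count}, and your argument unpacks precisely the bijective content of that theorem's inductive proof (decompose by cycle length, pass to the quotient forest on $d$-cycles, and apply the generalized Pr\"ufer lemma of~\S\ref{ss:counting} with $X=[d]$, $Y=[\mu_d]$), together with the identifications $[\lambda_d]\times[d]\leftrightarrow\Lambda_d\setminus\Lambda'_d$ and $[\mu_d]\leftrightarrow\Lambda'_d$ that recast each block as a $\sigma$-Pr\"ufer block. The self-flagged ``obstacle'' about distinct decorations giving distinct trees is exactly the strand-twisting observation at the start of~\S\ref{ss:counting}, so there is no real gap.
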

This is really just a reformulation of Theorem~\ref{thm:count}.

Let us illustrate this with three remarks. Firstly, if $\sigma=\text{id}$, then a $\sigma$-Prüfer sequence is really just a classical Prüfer sequence in $[n]^{n-2}$, with an extra $1$ appended.

Consider now as example the permutation $\sigma\in S_{25}$ from the previous section, and the corresponding tree from~\eqref{eq:tree25}. Let us re-encode it as a $\sigma$-Prüfer sequence. Because of the cycle counts of $\sigma$, this sequence will take the form
\[(*,*,1|*,*,*|*|*,*)\]
where the blocks of period $1,2,3,6$ have been separated by $|$. We first consider the fixed points of $\sigma$, and the corresponding subtree on $[4]$. Within that subtree, the smallest leaf is $2$, and its neighbour is $4$; the next smallest leaf is $3$ also with neighbour $4$; then the next smallest leaf is $4$ with neighbour $1$. Therefore the $\sigma$-Prüfer sequence starts as
\[(4,4,1|*,*,*|*|*,*).\]
We now move on to $2$-cycles and the corresponding subtree on $[10]$. The smallest leaf cycle is $(5,6)$ attached to $4$; the next-smallest is $(7,8)$ with minimum attached to $5$; the next-smallest $(9,10)$ attached to $1$; so the $\sigma$-Prüfer sequence continues as
\[(4,4,1|4,5,1|*|*,*).\]
In the subtree spanned by $\Lambda_3$, namely on $\{1,2,3,4,11,12,13\}$, there is a single leaf $3$-cycle $(11,12,13)$ attached to $2$, so the $\sigma$-Prüfer sequence continues as
\[(4,4,1|4,5,1|2|*,*).\]
Finally in the tree the smallest leaf $6$-cycle is $(24,25,26,27,22,23)$, with minimum attached to $18$; and after removing that $6$-cycle the smallest leaf $6$-cycle is $(14,15,16,17,18,19)$ with minimum attached to $10$. The $\sigma$-Prüfer sequence is therefore
\[(4,4,1|4,5,1|2|18,10).\]
It is clear that this process produces a bijection, essentially by following the steps in reverse.

The third remark is that there is some asymmetry in the handling of fixed points. This asymmetry vanishes if we count invariant forests rather than trees. For this, we consider an arbitrary permutation $\sigma\in S_n$, now set
\[\Lambda_d=\{0\}\cup\{i\in[n]:\sigma^d(i)=i\},\]
and let $\Lambda'_d\subseteq\Lambda_d$ consist of points of period strictly dividing $d$, with $0$ included. An \emph{extended $\sigma$-Prüfer sequence} a sequence of length $\lambda_1+\dots+\lambda_d$, such that the entries in the block of $d$-cycles belong to $\Lambda_d$, and the last entry in that block belongs to $\Lambda'_d$.
\begin{prop}
  There is a bijection between extended $\sigma$-Prüfer sequences and $\sigma$-invariant rooted forests on $[n]$.
\end{prop}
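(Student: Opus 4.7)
The plan is to reduce the statement to the previous Proposition via an ``add a virtual root'' trick. First, I would extend $\sigma$ to a permutation $\tilde\sigma$ of $\{0\}\cup[n]$ by setting $\tilde\sigma(0)=0$, so that $\tilde\sigma$ has $\lambda_1+1$ fixed points and $\lambda_d$ cycles of length $d$ for each $d\ge 2$.

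Next, I would set up the natural bijection between $\sigma$-invariant rooted forests on $[n]$ and $\tilde\sigma$-invariant rooted trees on $\{0\}\cup[n]$ rooted at $0$: given a forest, declare $0$ to be the root and join it by a new edge to each existing root; conversely, deleting $0$ recovers the forest. Since $\tilde\sigma$ fixes $0$, this bijection preserves the respective invariance properties.

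I would then verify that extended $\sigma$-Prüfer sequences are precisely the ordinary $\tilde\sigma$-Prüfer sequences (after relabeling $0\leftrightarrow 1$ to match the previous Proposition's convention that the distinguished fixed root is called $1$). The total lengths match because $\lambda_1+\cdots+\lambda_n=((\lambda_1+1)-1)+\lambda_2+\cdots+\lambda_n$; and block by block, the extended state-space $\Lambda_d=\{0\}\cup\{i:\sigma^d(i)=i\}$ is exactly $\tilde\Lambda_d$, since the fixed point $0$ of $\tilde\sigma$ belongs to every $\tilde\Lambda_d$ and to every $\tilde\Lambda'_d$ for $d\ge 2$, while the conventions $\Lambda'_1=\tilde\Lambda'_1=\{0\}$ align the $1$-block as well.

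The desired bijection is then the composition of the forest-to-tree map with the bijection of the previous Proposition applied to $\tilde\sigma$. The main obstacle is the boundary case $d=1$: one must verify that the earlier convention of pinning the root by forcing the last entry of the $1$-block to equal the root label coincides, in the extended setting, with the requirement that the last entry lies in $\Lambda'_1=\{0\}$. This is immediate upon noting that the previous Proposition is insensitive to the \emph{name} of the fixed root. As a consistency check, applying Theorem~\ref{thm:count} to $\tilde\sigma$ and counting extended $\sigma$-Prüfer sequences directly both yield $\prod_d f(\lambda_d,d,1+\mu_d)$, confirming that the map is indeed a bijection.
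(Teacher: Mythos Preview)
Your proposal is correct and follows essentially the same approach as the paper: extend $\sigma$ to $\{0\}\cup[n]$ by fixing $0$, pass from a $\sigma$-invariant rooted forest to a tree rooted at $0$ by joining all roots to $0$, and invoke the previous proposition. The paper's own proof is a terse three-line version of exactly this argument; your additional block-by-block verification and the consistency check via Theorem~\ref{thm:count} are correct elaborations but not a different route.
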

\begin{proof}
  Given a forest on $[n]$, extend it to a tree on $\{0\}\cup[n]$ by connecting all roots to $0$. This tree is rooted at $0$, and $\sigma'$-invariant for the permutation $\sigma'$ of $\{0\}\cup[n]$ that extends $\sigma$ by fixing $0$. Apply the previous proposition.
\end{proof}
(This is merely the generalization of the classical fact that Prüfer sequences in $[n+1]^{n-1}$ code forests on $[n]$).

\section{Comparison}\label{sec:compare}

The algorithms from Section~\ref{sec:algo} were implemented in the computer language \textsc{Julia}, and thanks to their essentially linear complexity were run to produce large numbers (in the millions) of random trees with large numbers of vertices (also in the millions). We have tested our code in various ways, checking in particular that, for small $n$, the proportion of Pólya trees produced matches the exact counts in~\ref{table:1}.

This section gathers some empirical data that can be extracted from these experiments. The first, and foremost, observation is that the Burnside process performs remarkably well. It is of course difficult to judge how close a sample is to the uniform distribution if the uniform distribution itself is not known; but we observe:

\begin{obs}
  Starting with the (obviously not random) tree of height $1$, given by the Prüfer code $[1,\dots,1]$, approximately $20$ steps of the Burnside process are sufficient to produce a uniformly random tree; i.e.\ a tree for which height, width, number of leaves, etc. have converged to their expected value.

  This has been checked experimentally for a number of vertices up to $n\approx 10^7$; and could be pushed further with more optimizations.
\end{obs}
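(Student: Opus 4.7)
The statement is an empirical claim; proving it rigorously would require a sharp mixing-time bound for the Burnside chain described in Section~\ref{sec:algo}, which is beyond current technology (even for the conjugation action on $S_n$, such bounds are active research problems, cf.~\cite{diaconis-tung,paguyo;burnside-partitions}). My plan is therefore to describe the statistical verification that supports the observation as stated, and then to explain heuristically why twenty steps is the correct order of magnitude.

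The verification fixes a panel of statistics whose stationary behaviour is pinned down by the theory recalled in Section~\ref{ss:polyatrees}: the leaf count (mean asymptotic to $(0.438156\dots)n$), the maximum degree (concentrated on at most three values by Theorem~\ref{thm:polyamaxdeg}), the height and width (governed by the rescaled continuum random tree limit), and $\log\#\text{Aut}(\tree)$ (asymptotically Gaussian with parameters $\mu n,\sigma^2 n$ after Olsson--Wagner). Starting the chain from the star encoded by $[1,\dots,1]$, one records the empirical means, variances and selected quantiles of each statistic at a geometric schedule $t\in\{1,2,5,10,20,50,100,500\}$, for $n$ ranging from the small values where Pólya trees can be enumerated exactly (cf.~\eqref{table:1}) up to $n\approx 10^7$. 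Convergence by $t=20$ is then diagnosed by two independent tests: agreement of the $t=20$ empirical moments with the theoretical predictions above, and stabilisation of the empirical distributions between $t=20$ and the larger values of $t$. As a further cross-check, one reruns the experiment from qualitatively different starting configurations (a path, a random Cayley tree, a Boltzmann sample) and verifies that every family of histograms converges to the same profile by $t=20$.

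The main obstacle is that the target distribution is itself unknown at large $n$, so convergence can only be diagnosed, never certified. What makes twenty steps plausible is the following heuristic. The initial star has stabiliser $S_{n-1}$, so the very first Burnside move picks $\sigma$ uniformly from a very large symmetric group, and step~(2) then samples a tree uniformly from the (by Theorem~\ref{thm:count}) already enormous set $\mathscr T_\sigma$; the chain thereby loses essentially all memory of its starting configuration in a single transition. Thereafter the Olsson--Wagner estimate $\#\text{Aut}(\tree)\approx e^{\mu n}$ implies that from a typical state the local neighbourhood in the Burnside bipartite graph of~\S\ref{ss:burnside} is exponentially wide, suggesting a spectral gap bounded away from $0$ and hence $O(1)$ mixing. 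Turning this heuristic into a theorem, ideally via a canonical-path or coupling argument exploiting the large automorphism sizes, is the natural follow-up problem left open by this paper.
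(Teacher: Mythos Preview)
Your proposal is correct in spirit and, if anything, more detailed than what the paper itself offers. The paper does not prove this statement: it is recorded explicitly as an empirical \emph{Observation}, preceded by the caveat that ``it is of course difficult to judge how close a sample is to the uniform distribution if the uniform distribution itself is not known'', and the Conclusions reiterate that the authors ``do not have any theoretical foundation for the performance of our algorithm''. So your opening sentence, that a rigorous mixing bound is beyond current technology, is exactly the paper's own position.

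Where you differ is that you flesh out a systematic verification protocol --- a fixed panel of statistics, a geometric time schedule, multiple starting states, cross-comparison between $t=20$ and later times --- whereas the paper's supporting evidence is the remainder of Section~\ref{sec:compare} itself: the height, width, path-length, degree and maximum-degree experiments are all run after approximately $20$ Burnside steps from the star, and their agreement with the analytic predictions (Robinson--Schwenk leaf proportions, Goh--Schmutz maximum degree, etc.) serves as the implicit check. Your heuristic about the first step from the star drawing $\sigma$ uniformly from $S_{n-1}$ and then sampling from a huge $\mathscr T_\sigma$, together with the Olsson--Wagner estimate for typical automorphism-group size, is a nice addition; the paper does not articulate any such mechanism, though it gestures at constant mixing time in the Conclusions. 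In short: there is no gap to flag, and your write-up would make a reasonable expanded justification for the observation that the paper leaves entirely to its numerical Section~\ref{sec:compare}.
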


We shall consider in turn the ``features'' (distances, growth, degrees) from~\S\ref{ss:trees}, comparing experimental data with those predicted using the constants $b,\rho$ appearing in Theorem~\ref{thm:otter}, and in particular with $\sigma=b\sqrt{\rho/2}$, the variance of the associated Galton-Watson process. We record our route to computing them at the end of this section; for reference, they are
\[\rho\approx 0.3383218568992077,\qquad b\approx 2.681128147267112,\qquad \sigma\approx 1.1027259685996555.\]

There are actually three kinds of comparisons that we have in mind:
\begin{enumerate}
\item compare our sampler (the ``Burnside sampler'') to others, and in particular the ``Boltzmann sampler'' mentioned in~\S\ref{ss:polyatrees};
\item compare features of Cayley and Pólya trees;
\item compare experimental data to theoretical predictions.
\end{enumerate}

\subsection{Comparing samplers}\label{ss:boltzmann}
We have compared our code to freely available implementations of the Boltzmann sampler. One of them, ``usain-boltz'', is distributed as the Python package \texttt{usainboltz}. It can generate structures following a grammar, and attempts to tune parameters so as to produce objects with the correct cardinality. For example, producing a Pólya tree is in principle obtained by the grammar rules
\begin{verbatim}
  z = Atom()
  B = RuleName("B")
  grammar = Grammar({B: z * MSet(B)})
  generator = Generator(grammar, B, singular=True)
\end{verbatim}
expressing $B$ as an atom $z$ followed by an unordered collection of $B$'s. The command \verb+generator.sample((1000,1000))+ produces a random tree with $1000$ vertices in roughly $2$ seconds. Another freely-available sampler was written by Carine Pivoteau, see\\
\centerline{\url{https://github.com/CarinePivoteau/Alea2023Notebooks}}
It also consists of easy-to-interface Python code, and can also produce a Pólya tree with $1000$ vertices in roughly $2$ seconds. A rough comparison (qq plot of tree depth of her trees compared to ours, see the next subsection) indicated that all three samplers are either correct, or at least suffer from similar bugs:
\[\includegraphics[width=7cm]{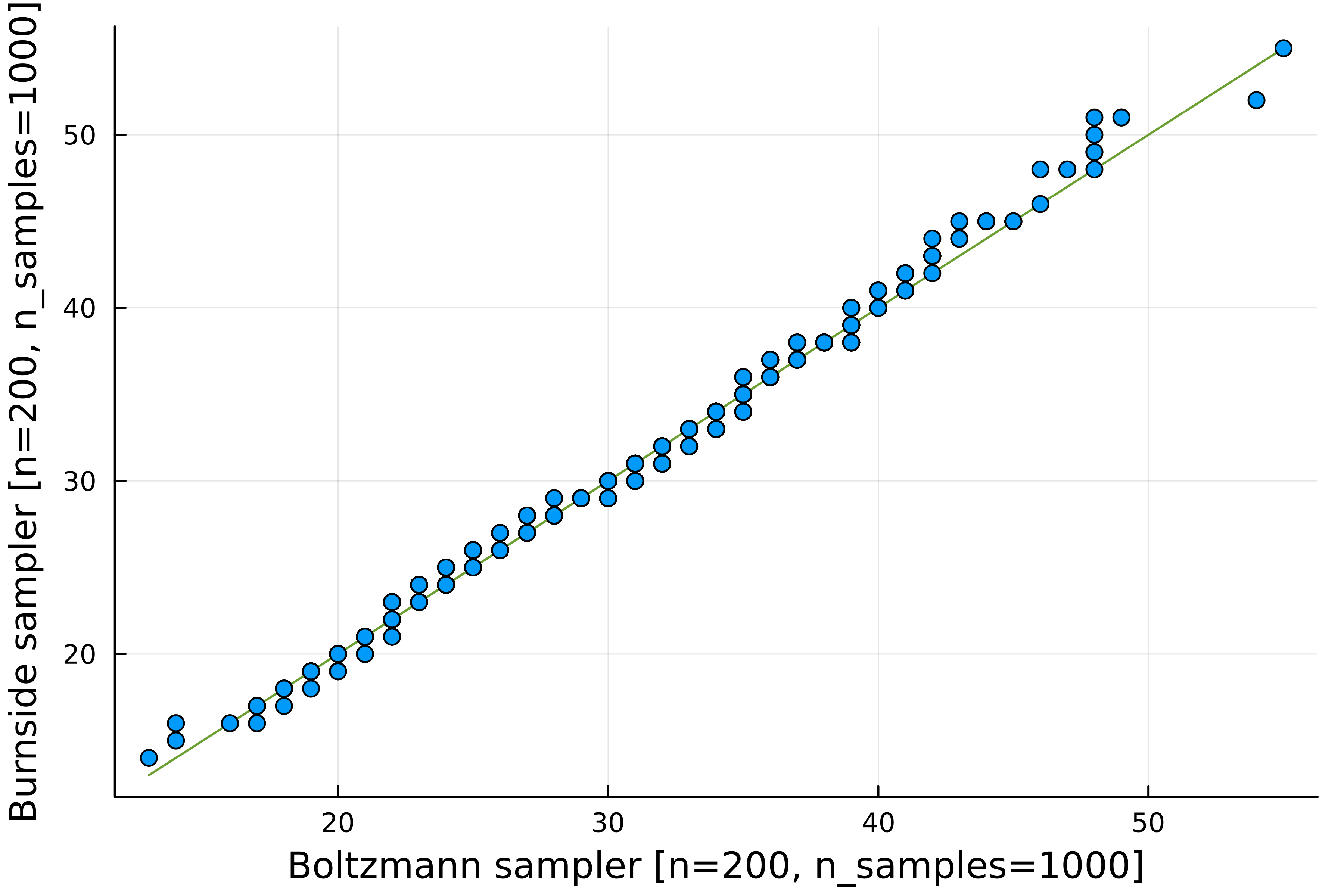}\]

We stress again the fact that the Boltzmann sampler doesn't return a random Pólya tree of exact degree $n$, but rather a random tree of degree $N$ where $N$ is random with mean $n$. For large $n$, our experience shows it can take thousands of sample to get one of the exact degree needed. Of course, fixed degree is crucial for tasks such as comparing data to limit approximations.

\subsection{Distances}
One well-understood statistic is the height $H_n$ of a random Pólya tree. A Galton-Watson trees whose offspring distribution has variance $\sigma$ has, according to~\cite[Theorem 4.8]{drmota;trees}, normalized height $H_n/\sqrt n$ converging in distribution to $2/\sigma\max_{0\le t\le1}E_t(t)$, where $E_t$ is the Brownian excursion, see~\S\ref{ss:crt}. There is an analytic expression for this maximum, sometimes called the \emph{Kolmogorov distribution}:
\[M(x)=1-\prod_{k\ge1}2(4k^2x^2-1)\exp(-2k^2x^2).\]
Since height is a continuous parameter, we deduce, following~\cite[Theorem 4.59]{drmota;trees},
\[H(\tree_n)/\sqrt n\approx\frac{2\sqrt2}{b\sqrt\rho}\max_{t\in[0,1]}E_t.\] 

We computed the heights of 1\,000\,000 random trees with 1000 vertices to the analytic estimate, and fitted a function of the form $dM(\mu_e+\sigma_e x)/d(\mu_e+\sigma_e x)$ for optimal $\mu_e,\sigma_e$ to the data, obtaining an empirical estimate $\sigma_e$ of $\sigma$. The result, and its qq-plot compared to the analytic Kolmogorov distribution is
\[\includegraphics[width=7cm]{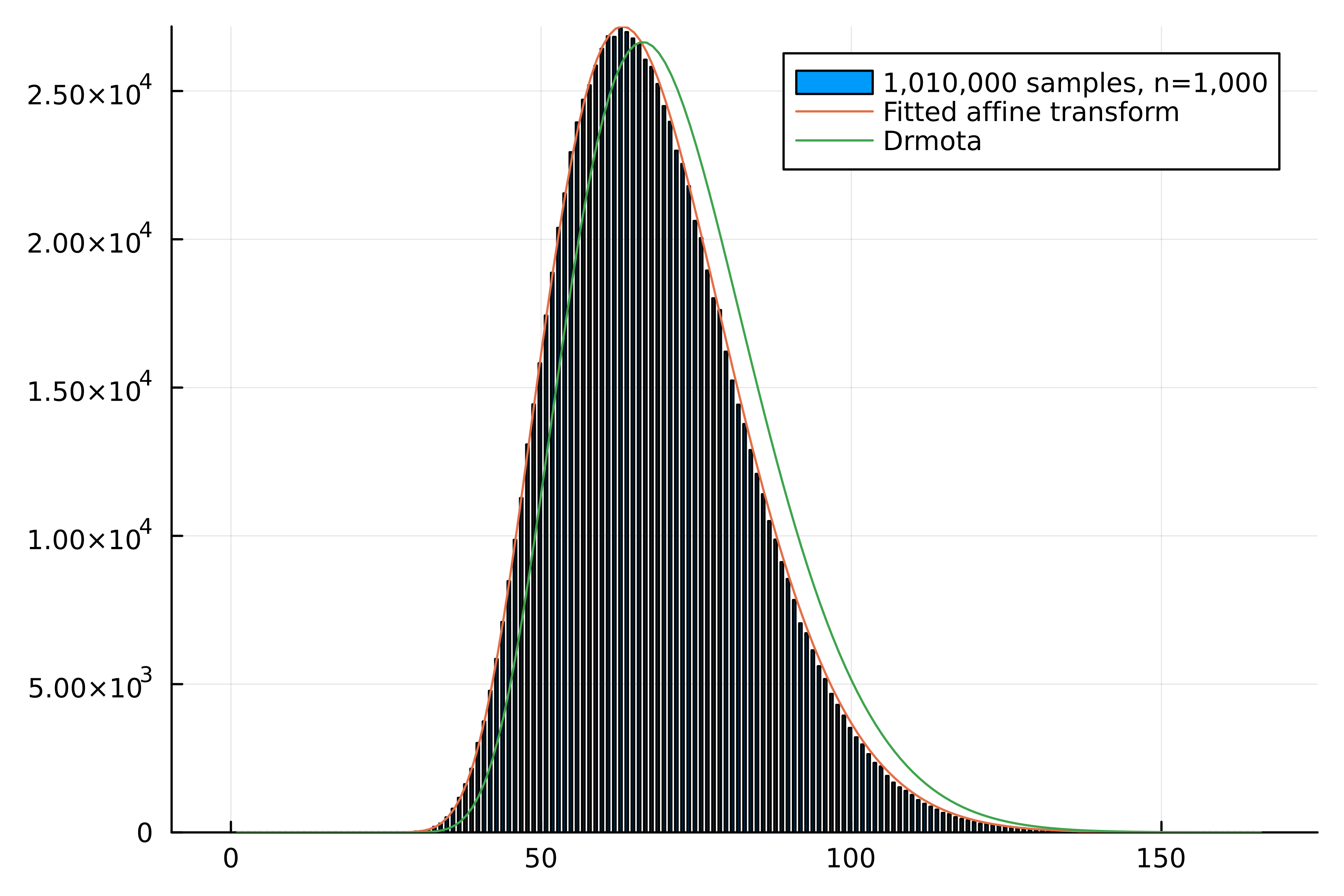}\qquad\includegraphics[width=7cm]{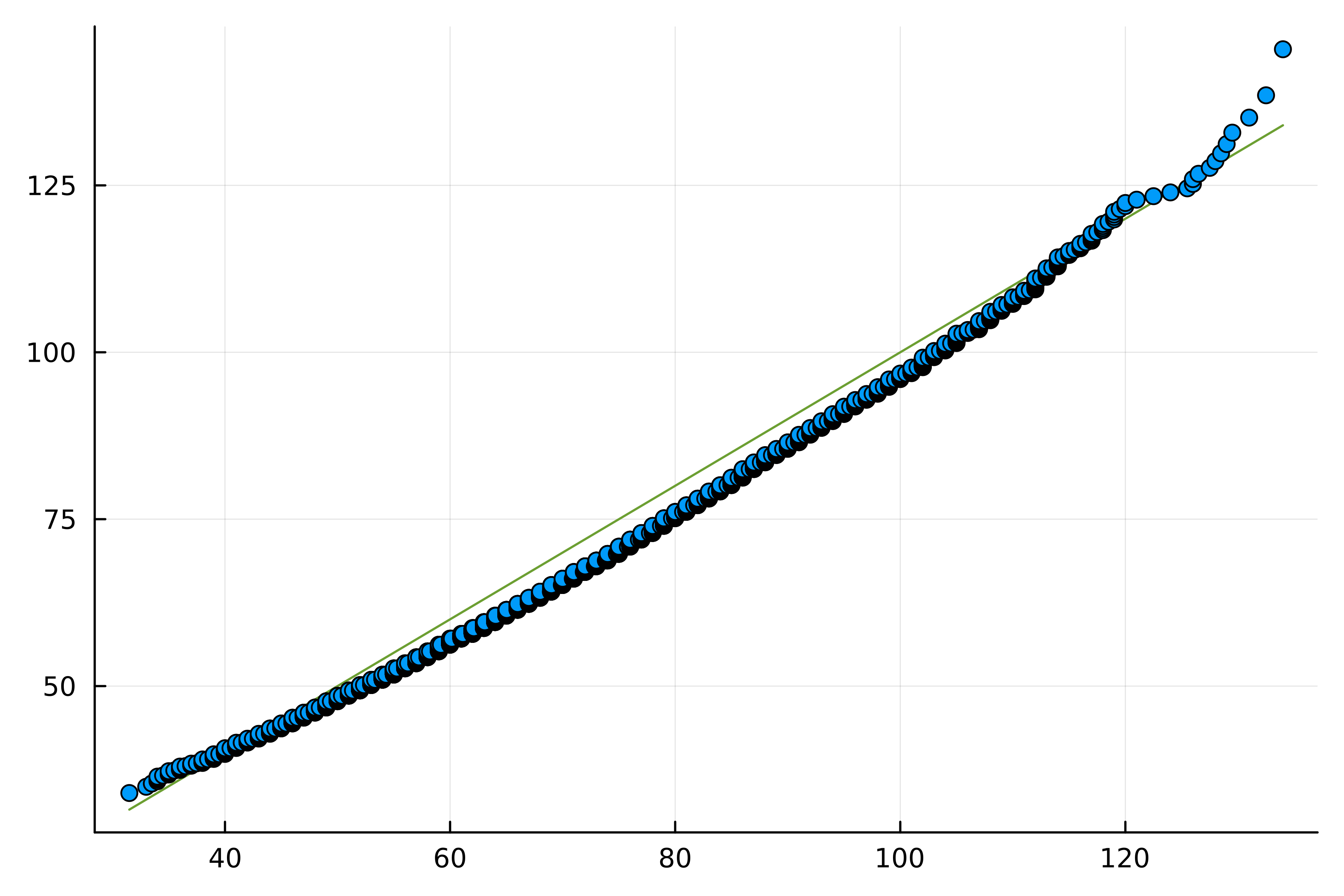}\]
One sees that the optimal fit, on the left, is extremely good, but quite far from the asymptotics; while the qq-plot shows significant differences. In particular, the optimal $\sigma_e$ turns out to be approximatively $1.04$, and this value persists for $n=10\,000$ and even $n=100\,000$; so if there is convergence of $\sigma_e$ to $1.10$ then it certainly is very slow.

The comparison with Cayley trees also gives quite clear results; the distributions are similar, but with a different scale. Here is the qq plot of depth of Cayley trees compared to Pólya trees:
\[\includegraphics[width=7cm]{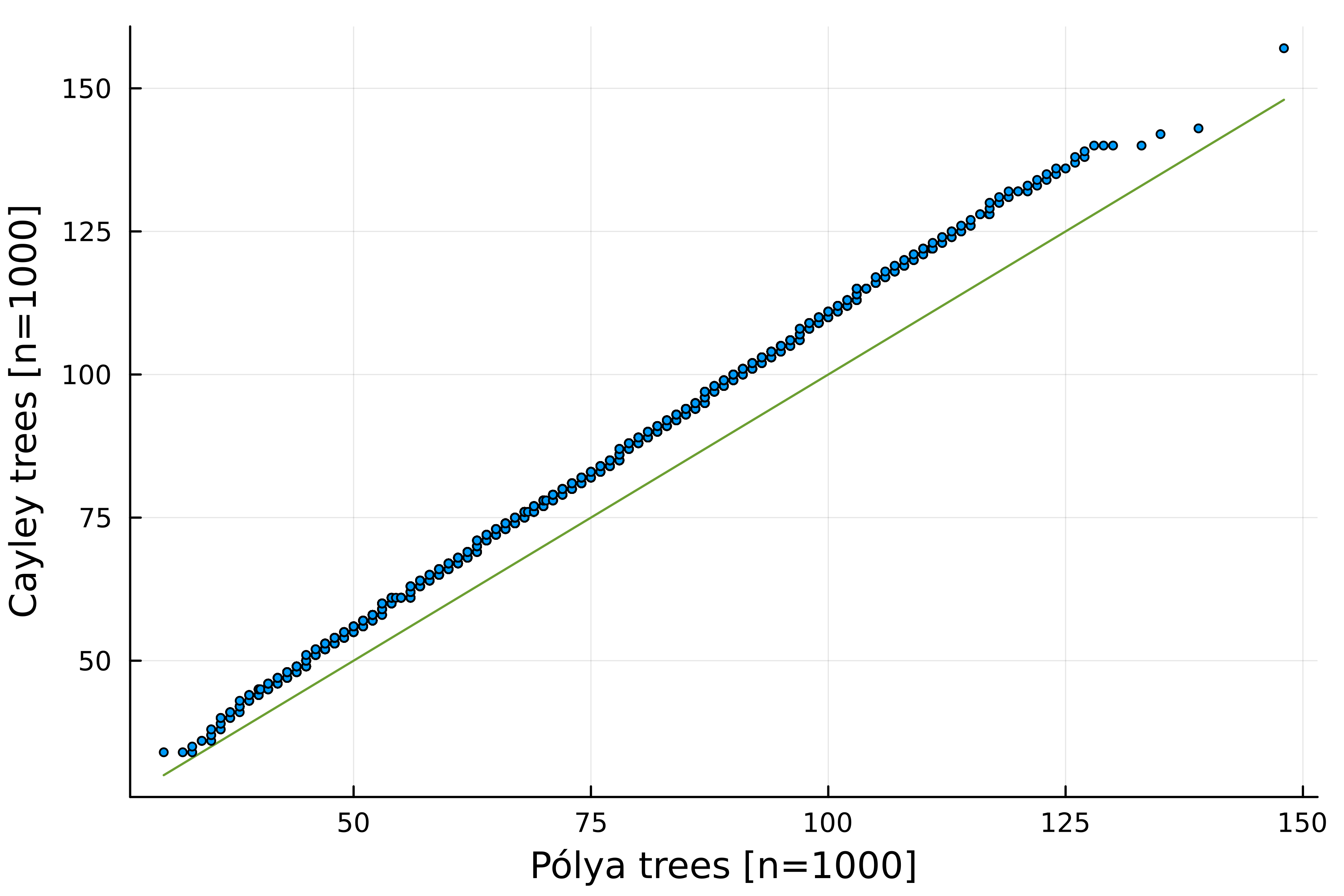}\]

Another continuous function is the path length $I_n$ of a random Pólya tree, namely the sum of the all distances from vertices to the root. According to~\cite[Theorem 4.9]{drmota;trees}, the normalized path length $I_n/n^{3/2}$ converges in distribution to $2/\sigma\int_0^1 E_t dt$; this last term $\omega_+=\int_0^1 E_t dt$ is often called the \emph{Brownian excursion area}, and its distribution, known as the \emph{Airy distribution} has been computed by Takács~\cite[Theorem~5]{takacs;bernoulli-excursion}: recall Kummer's confluent hypergeometric series
\[U(a,b;z)=\frac{\Gamma(1-b)}{\Gamma(a+1-b)} {}_1F_1(a,b;z) + \frac{\Gamma(b-1)}{\Gamma(a)}z^{1-b}{}_1F_1(a+1-b,2-b;z),\]
and let $\alpha_1\approx2.3381,\alpha_2\approx4.0879,\dots$ be the absolute values of the negative zeros of the Airy function $Ai$; then $\omega_+$ is distributed as
\[\frac{2^{13/6}3^{-3/2}}{x^{10/3}}\sum_{i\ge1}\exp(-2\alpha_i^3/27x^2)\alpha_i^2 U(-5/6,4/3;2\alpha_i^3/27x^2).\]
Again we see a good agreement between experimental data and the theoretical distribution, but with a significantly different parameter $\sigma$:
\[\includegraphics[width=10cm]{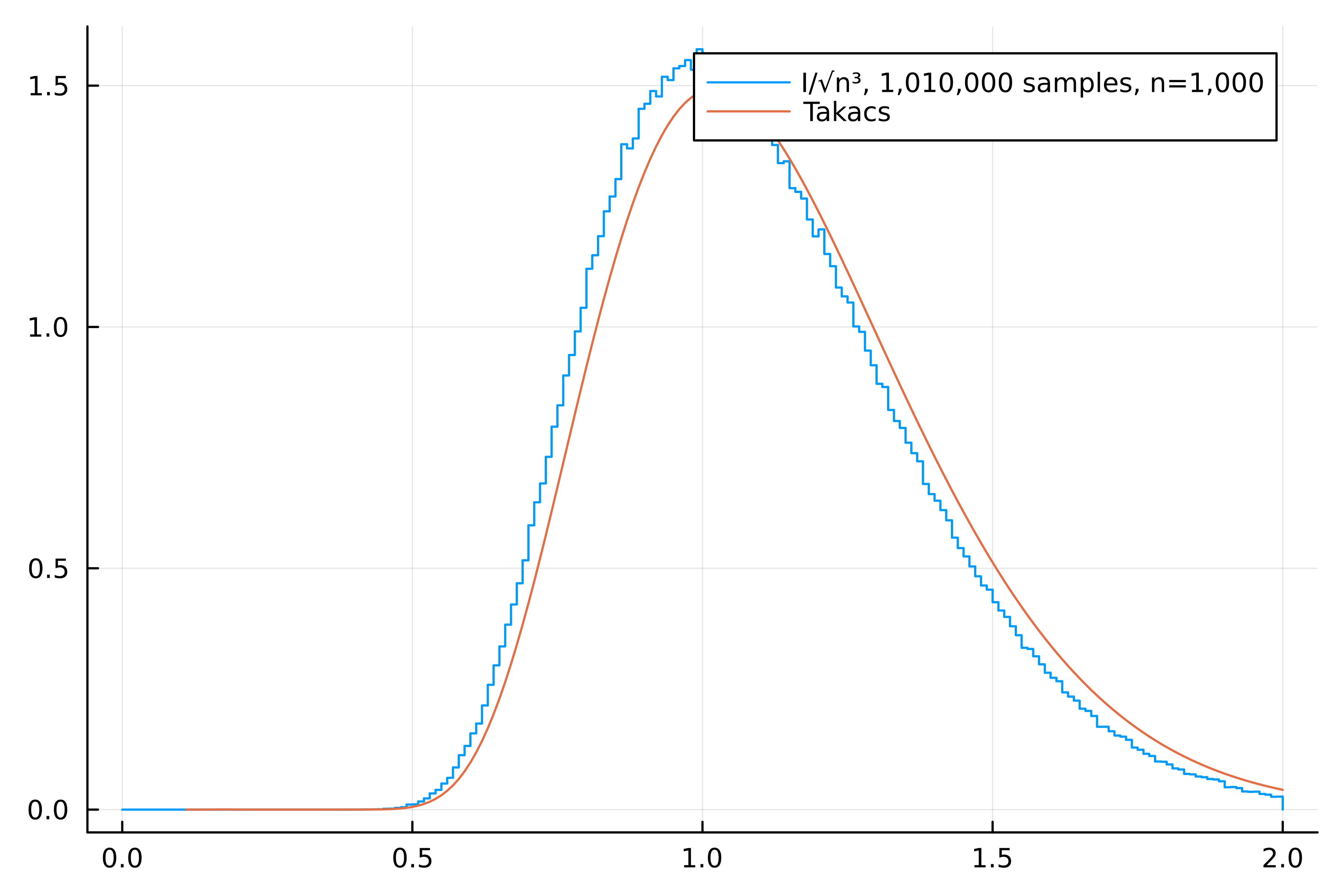}\]

\subsection{Growth}
Recall that the growth function $L_n(k)$ is the number of vertices at distance $k$ from the root in a random $n$-vertex Pólya tree. These parameters are also continuous, and admit limiting distributions based on the Brownian excursion; setting $\ell_n(t)=n^{-1/2} L_n(t n^{1/2})$, we have by~\cite[Theorem 4.10]{drmota;trees} that $\ell_n(t)$ behaves as $\sigma/2 \ell(t\sigma/2)$ for $\ell$ the \emph{occupation measure}
\[\ell(t)=\lim_{\epsilon\to0}\epsilon^{-1}\int_0^1 \mathbf1_{[t,t+\epsilon]}(E_s)ds.\]

In particular, the maximum of $L_n$ is the \emph{width} $W_n$, and since the maximum is a continuous functional we obtain, according to~\cite[Theorem 4.11]{drmota;trees},
\[W(T_n)/\sqrt n\approx 2\frac{\sqrt8}{b\sqrt\rho}\sup_{t\in[0,1]}\ell(t);\]
note then that the distribution of $\sup\ell(t)$ is twice that of $\sup E_t$.

We can compare the widths of 1\,000\,000 random trees with 1000 vertices to the analytic estimate, and fit a function of the form $dM(\tau x)/d(\tau x)$ for optimal $\tau$ to the data; the predicted $\tau$ is $2b\sqrt{\eta/8n}$. The result is
\[\includegraphics[width=10cm]{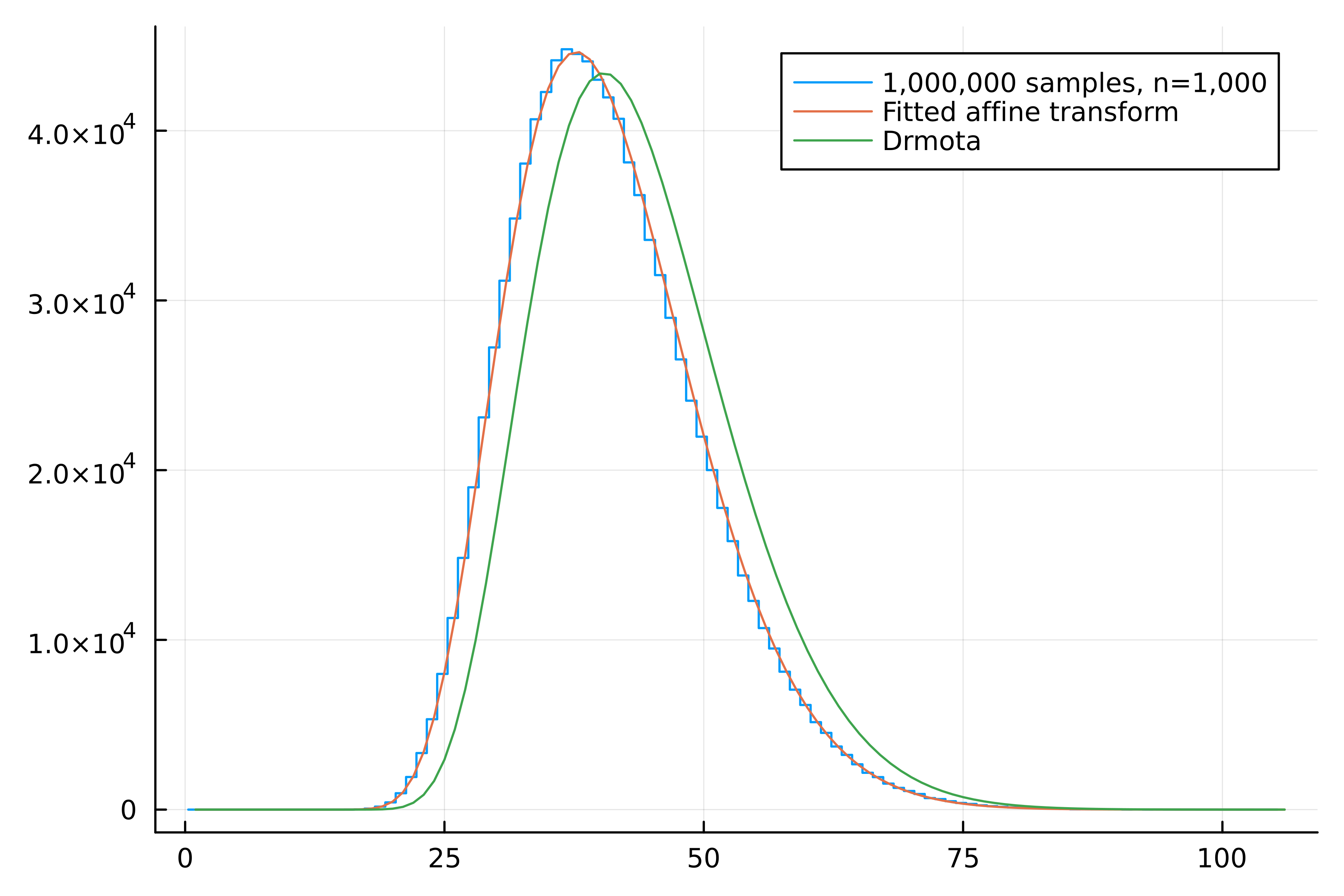}\]

Once more, the optimal, empirical parameter gives an extremely good fit, while the analytic value of the parameter is quite far from the observations.

The comparison with Cayley trees also gives quite clear results; the distributions are similar, but with a different scale. Here is the qq plot of the width of Cayley trees compared to Pólya trees:
\[\includegraphics[width=7cm]{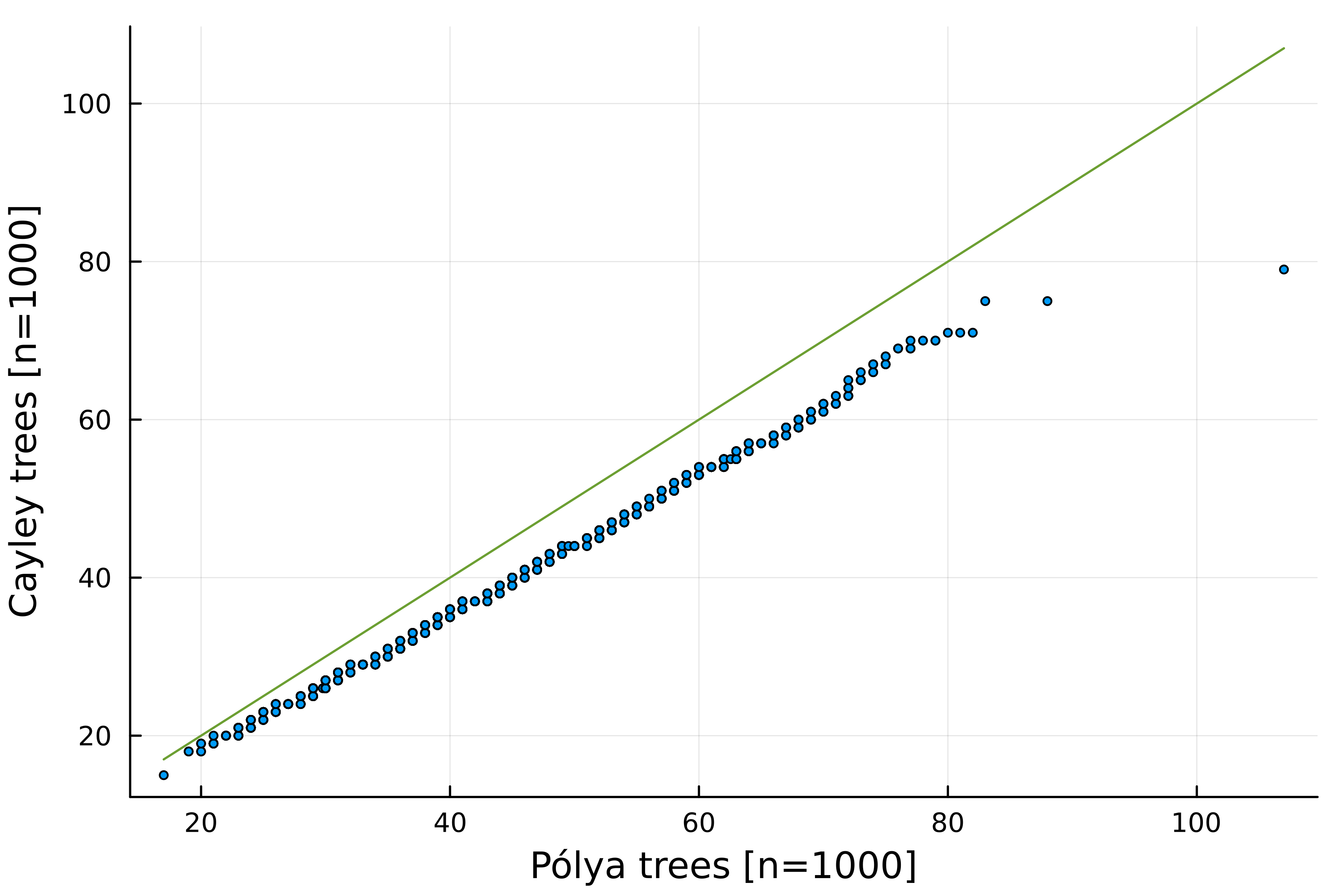}\]

\subsection{Degrees}\label{ss:degrees}
The degree distribution in a random Pólya tree is of a quite different nature, and cannot be approached at all by a limiting model such as the CRT. One of the most practical methods involves generating functions: recall the generating function $t(z)$ for Pólya trees from~\eqref{eq:functional}, and note that there is a similar-looking functional equation for the generating function $f_m$ of Pólya trees with out-degree at most $m$; this already appears in Otter~\cite{otter;trees}.

\begin{figure}[h!]
  \centerline{\includegraphics[width=6cm]{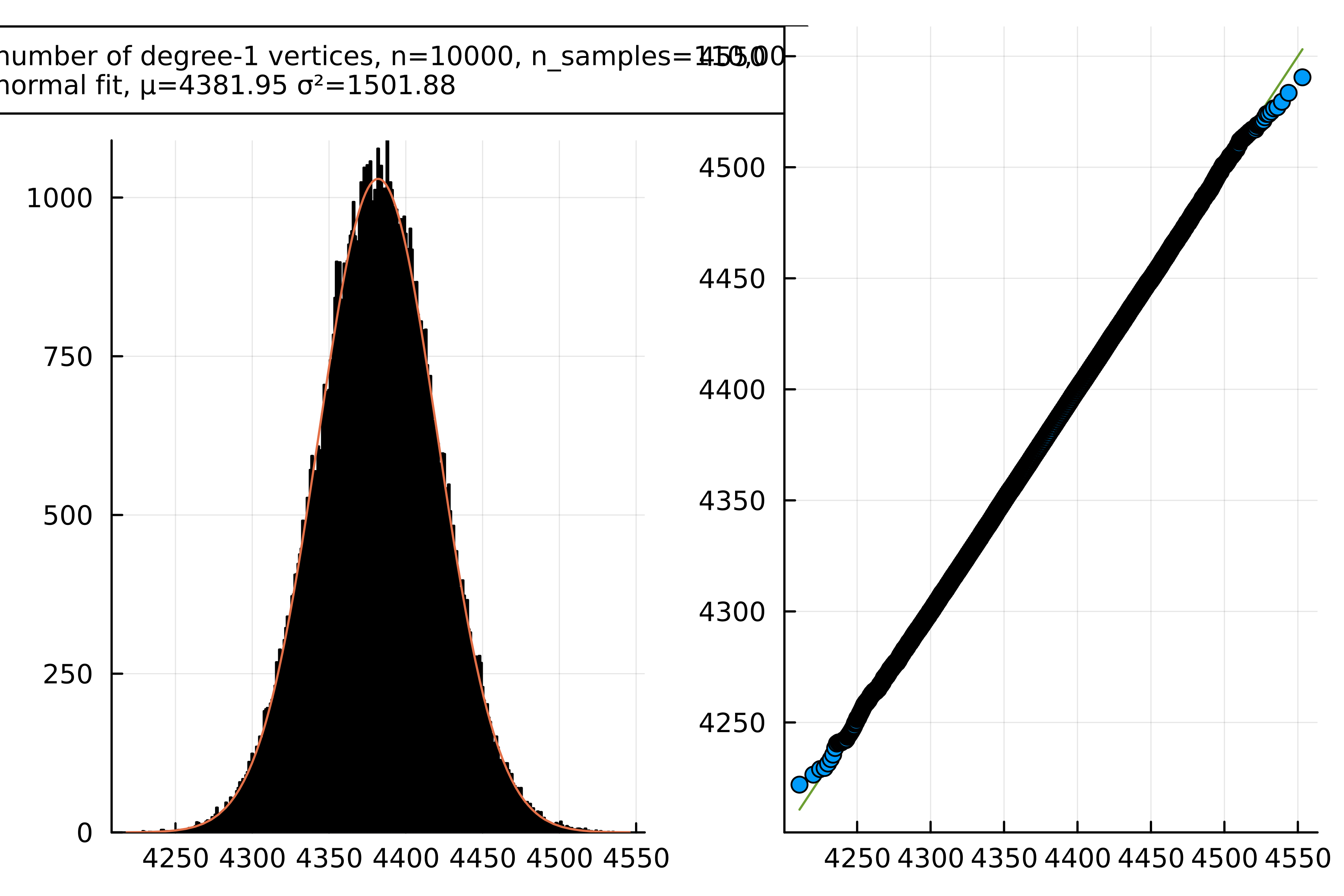}\quad\includegraphics[width=6cm]{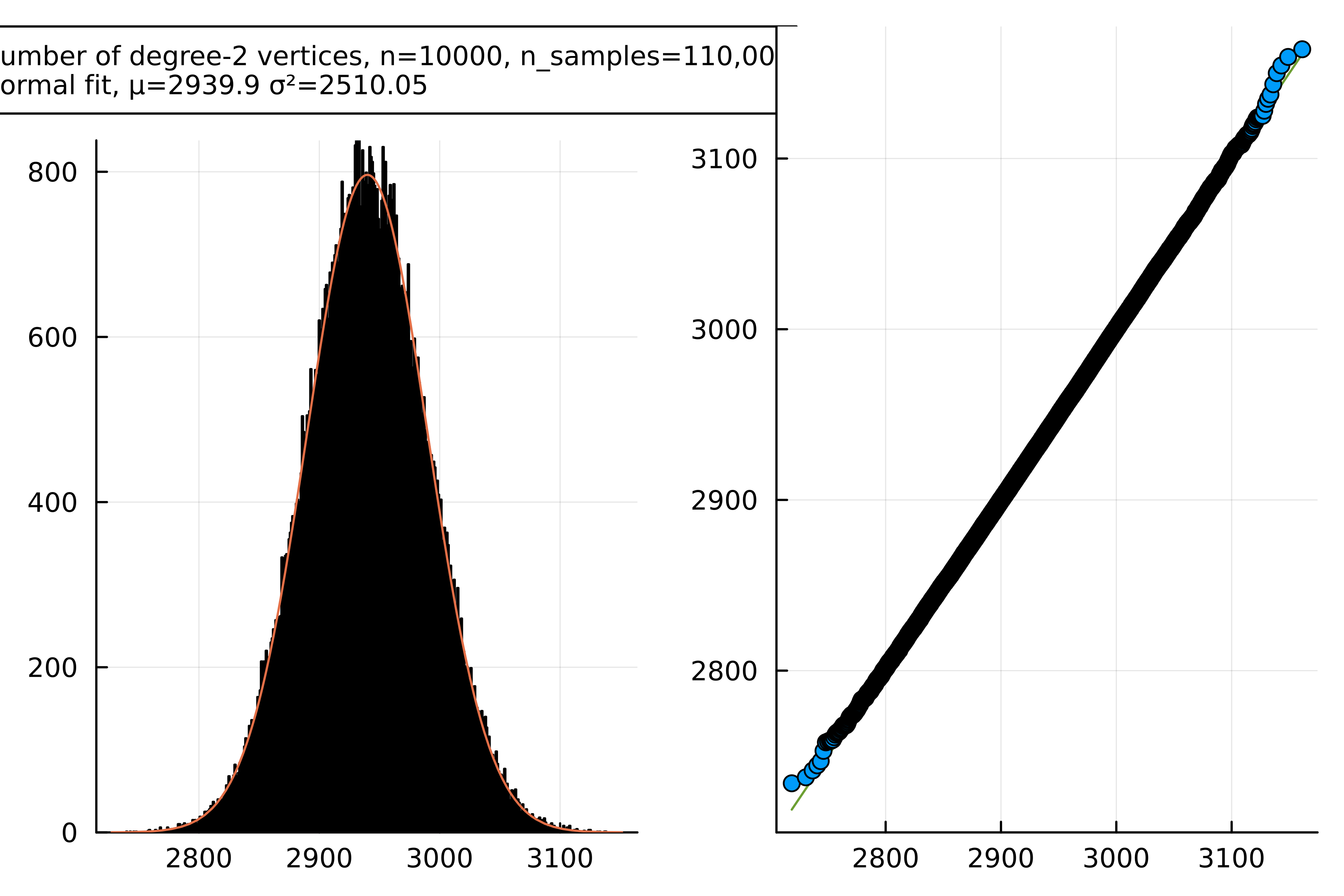}}

  \centerline{\includegraphics[width=6cm]{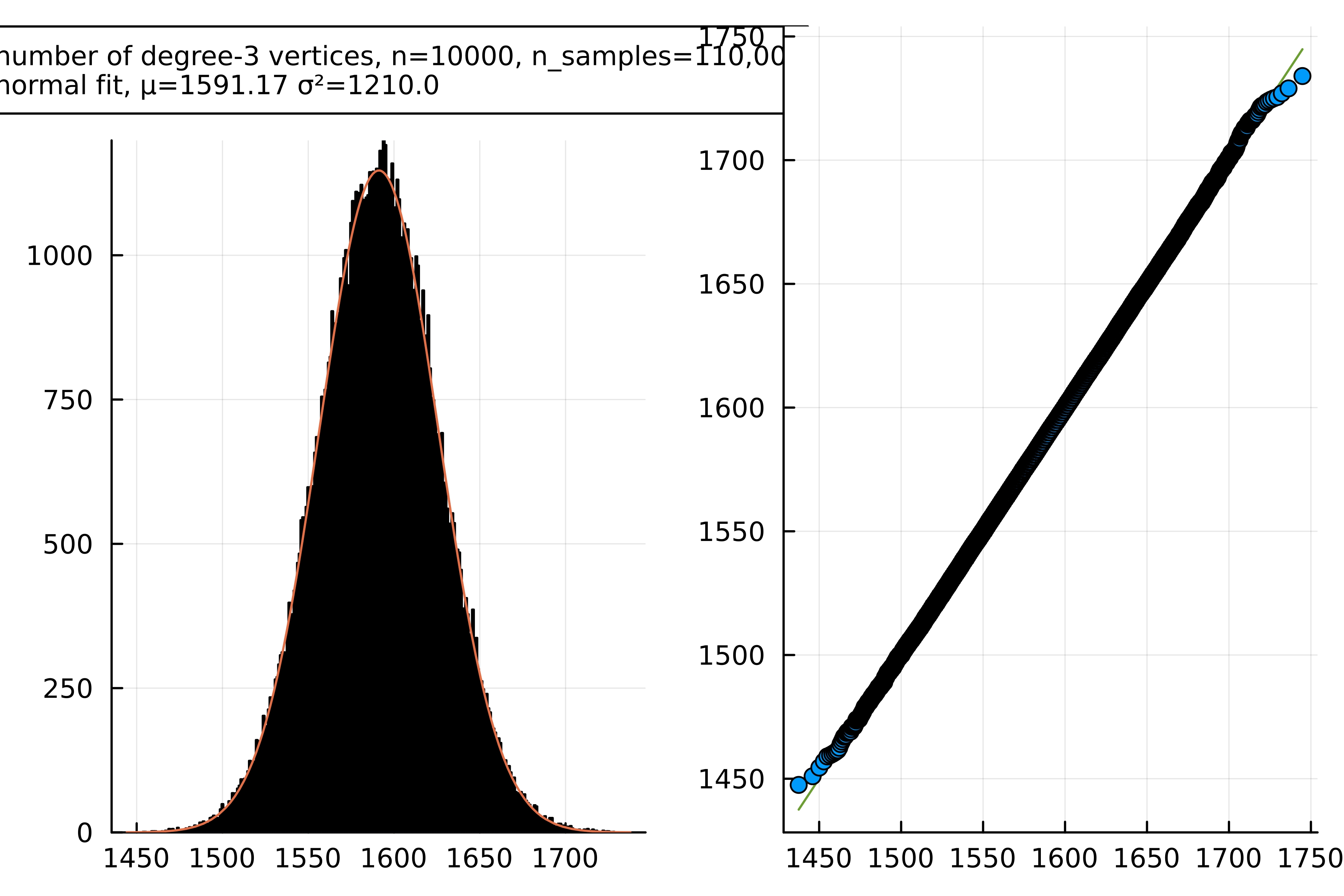}\quad\includegraphics[width=6cm]{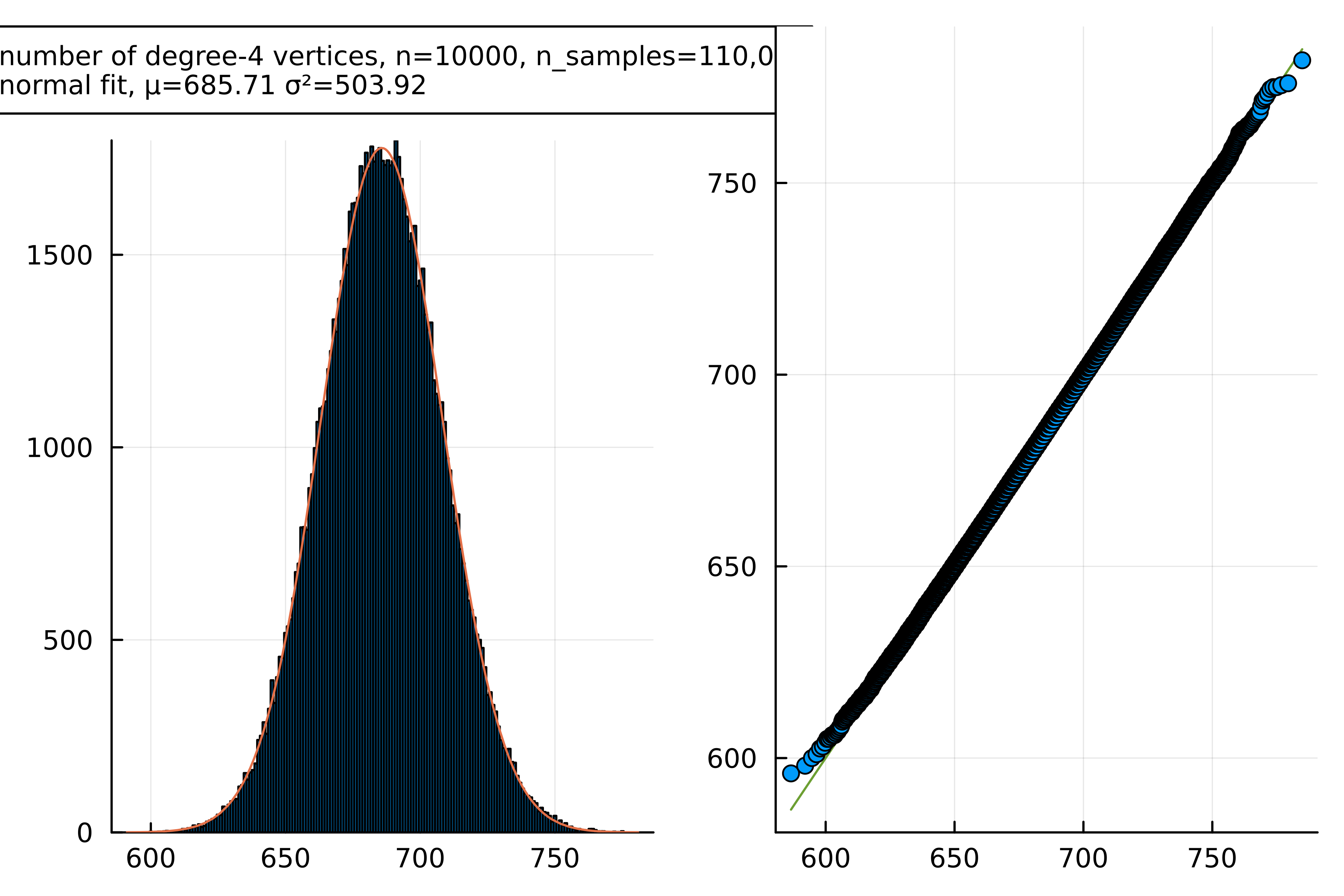}}

  \centerline{\includegraphics[width=6cm]{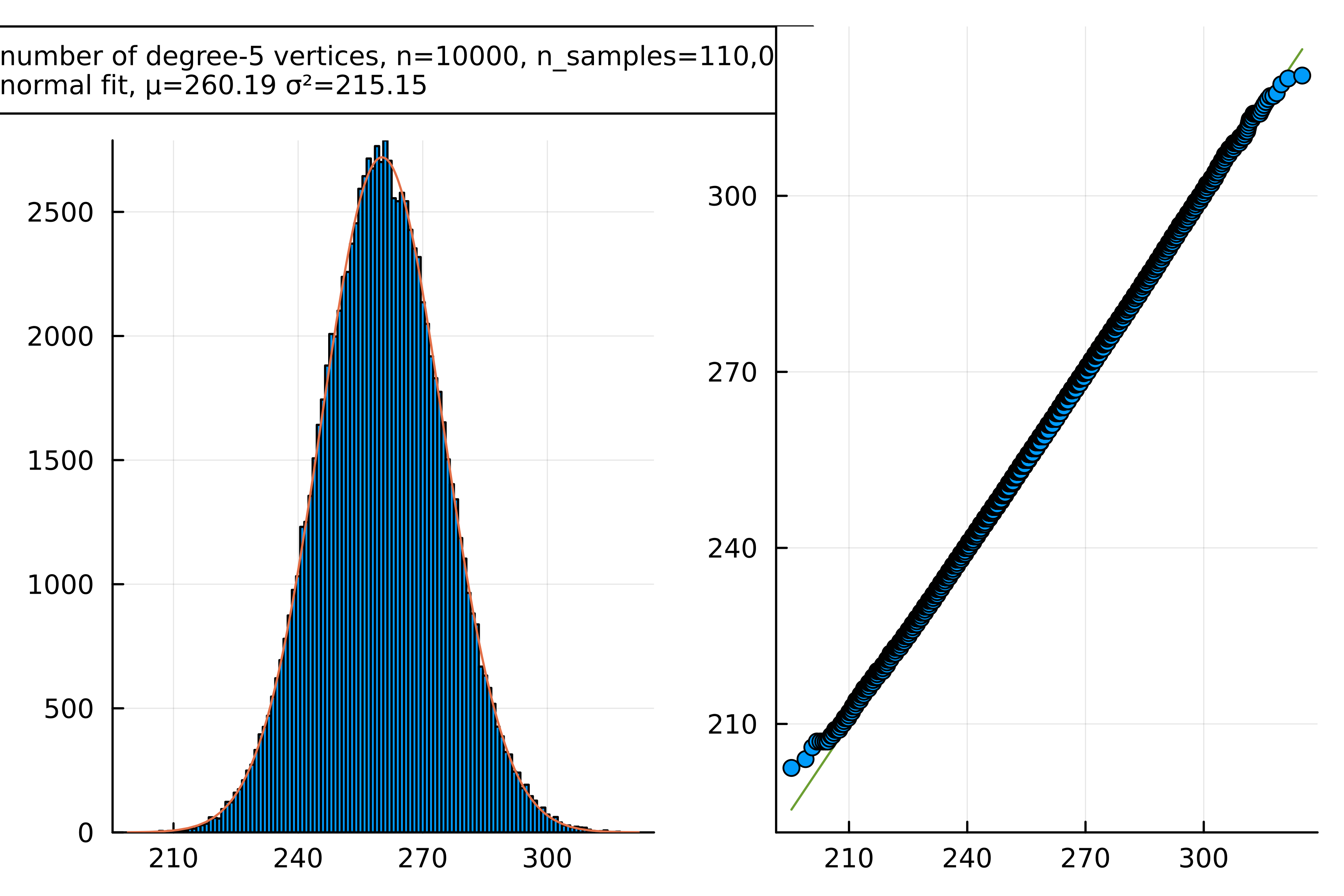}\quad\includegraphics[width=6cm]{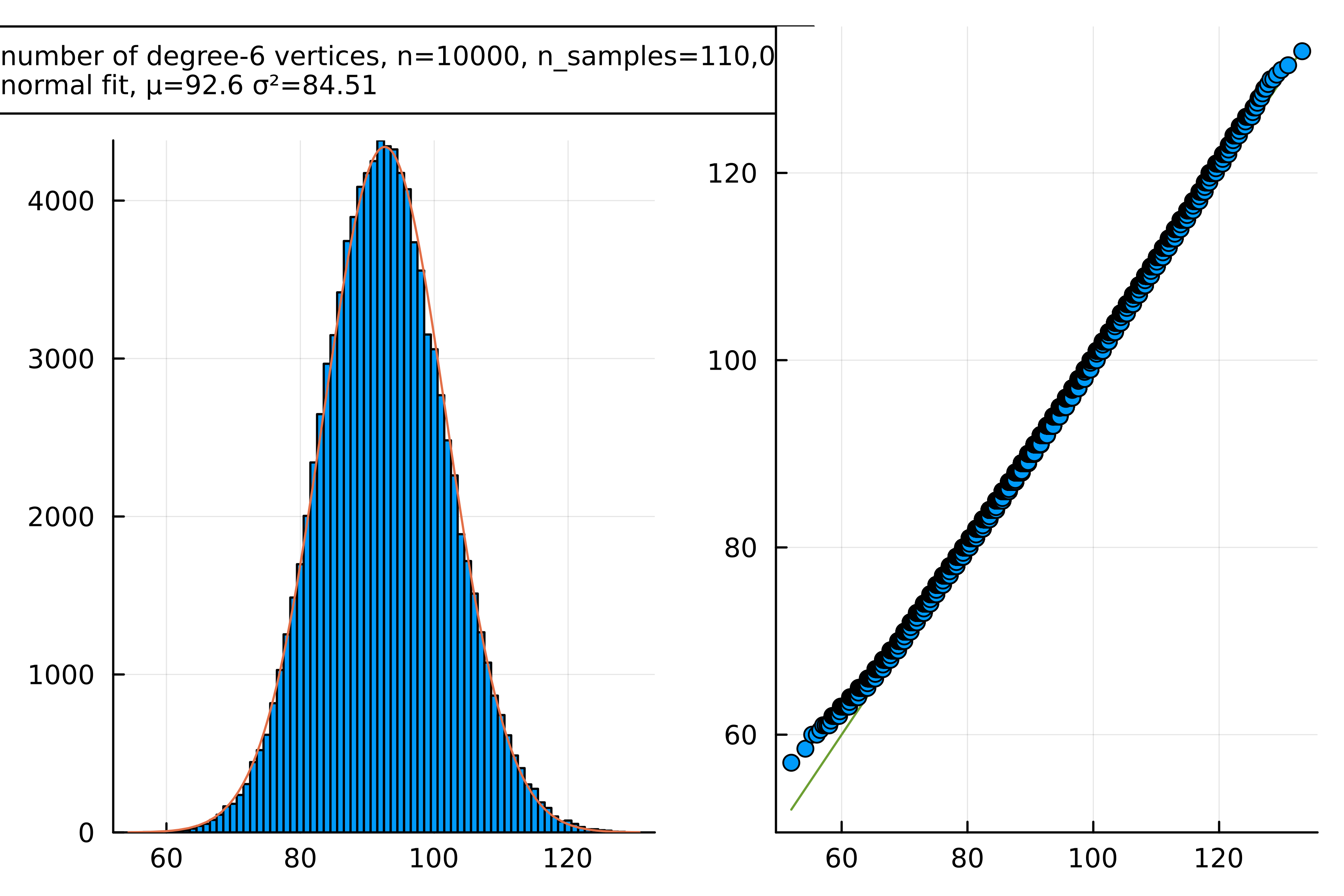}}

  \centerline{\includegraphics[width=6cm]{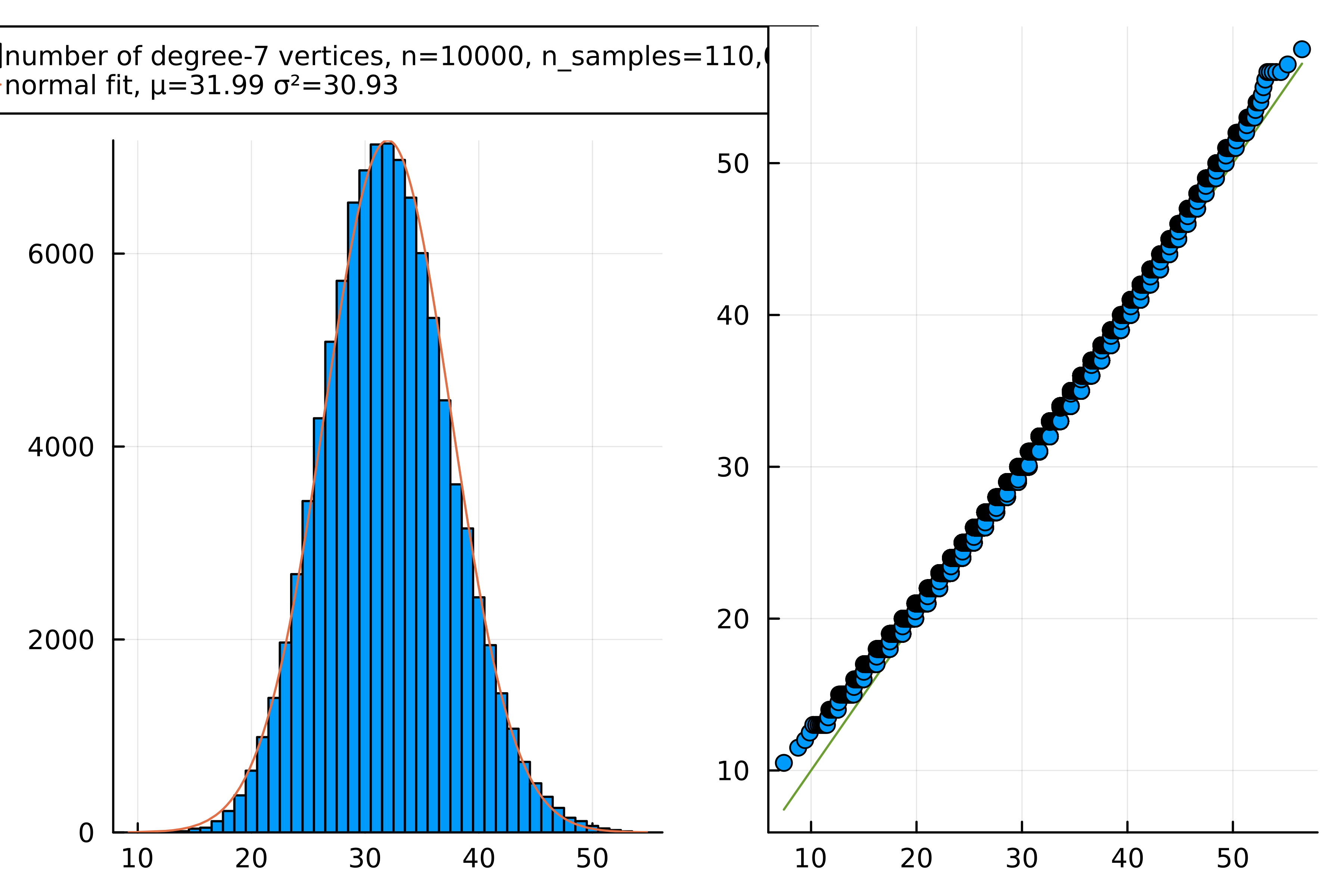}\quad\includegraphics[width=6cm]{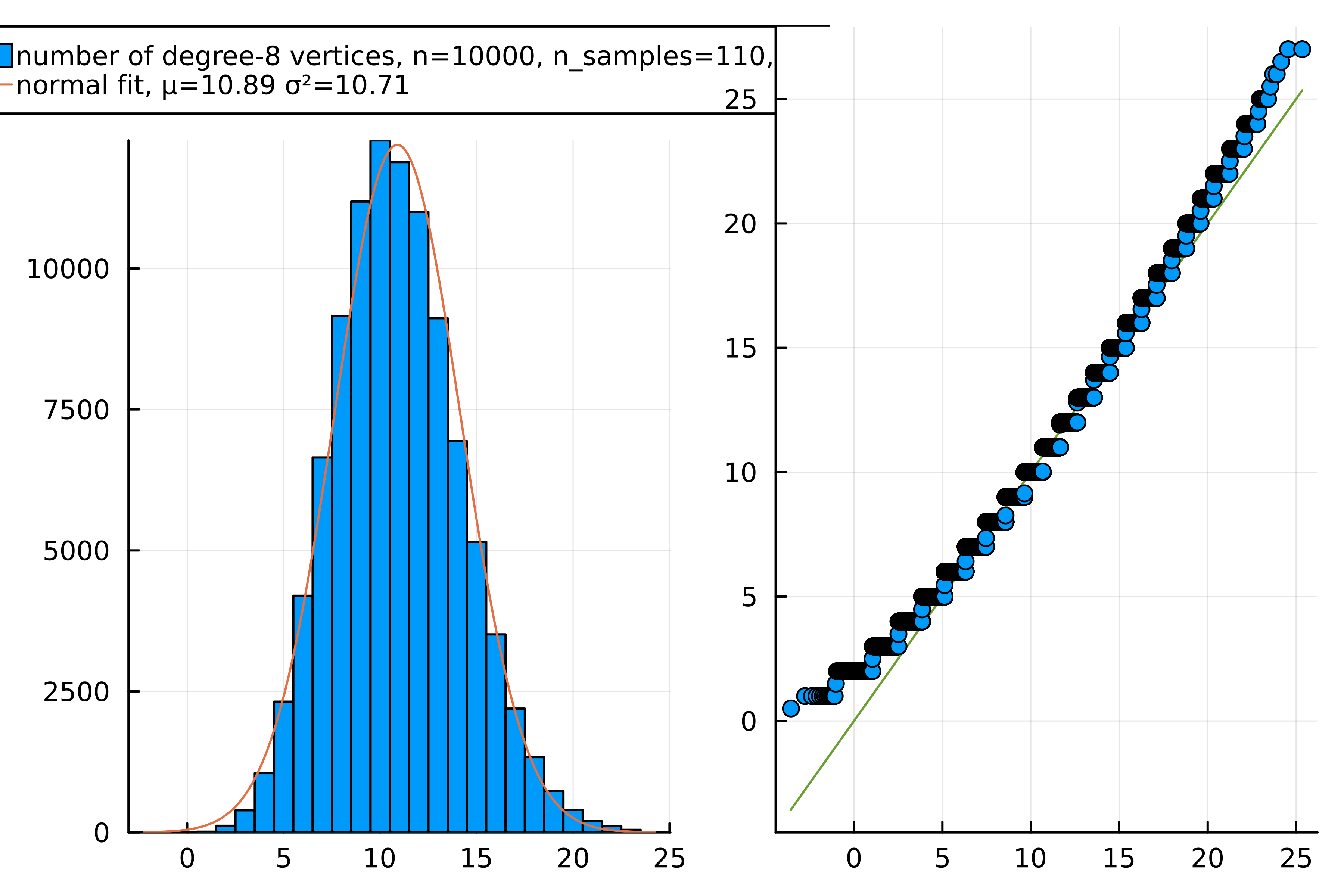}}
  \caption{The distributions of degree-$n$ vertices in a random Pólya tree, for $n=1,\dots,8$}\label{fig:degreedistribution}
\end{figure}

All the functions $f_m$ have a root singularity at $\rho_m$ on their circle of convergence, leading to the asymptotic count $t_{n,m}\approx b_m\rho_m^n n^{-3/2}$ of Pólya trees with maximal out-degree $m$. Here $t_{n,\infty}=t_n$ and $\rho_\infty=\rho$ is the constant we had before.

Goh and Schmutz~\cite{goh-schmutz;unlabeled} show that $\rho_m$ behaves quite precisely as
\[\rho_m \approx \rho + c \rho^{m+1},\]
for some constant $c\approx1.1103$. This immediately leads to
\[\mathbb P(\text{max.deg.}\le m)\approx\frac{b\rho^n n^{-3/2}}{b_m\rho_m^n n^{-3/2}}\approx\exp(-c n\rho^m).\]
This means that the maximum degree has expected value $-\log(n)/\log(\rho)$, doubly-exponential decay below its expectancy, and exponential convergence to $1$ above its expectancy.

Note that this is quite different from the maximum degree of a random labeled tree, whose expectancy is $\propto \log n/\log\log n$ and concentrates on three values, see Theorem~\ref{thm:polyamaxdeg}. For comparison, here is a plot with both distributions, where the circle radius is proportional to the logarithm of the distribution density:
\[\includegraphics[width=7cm]{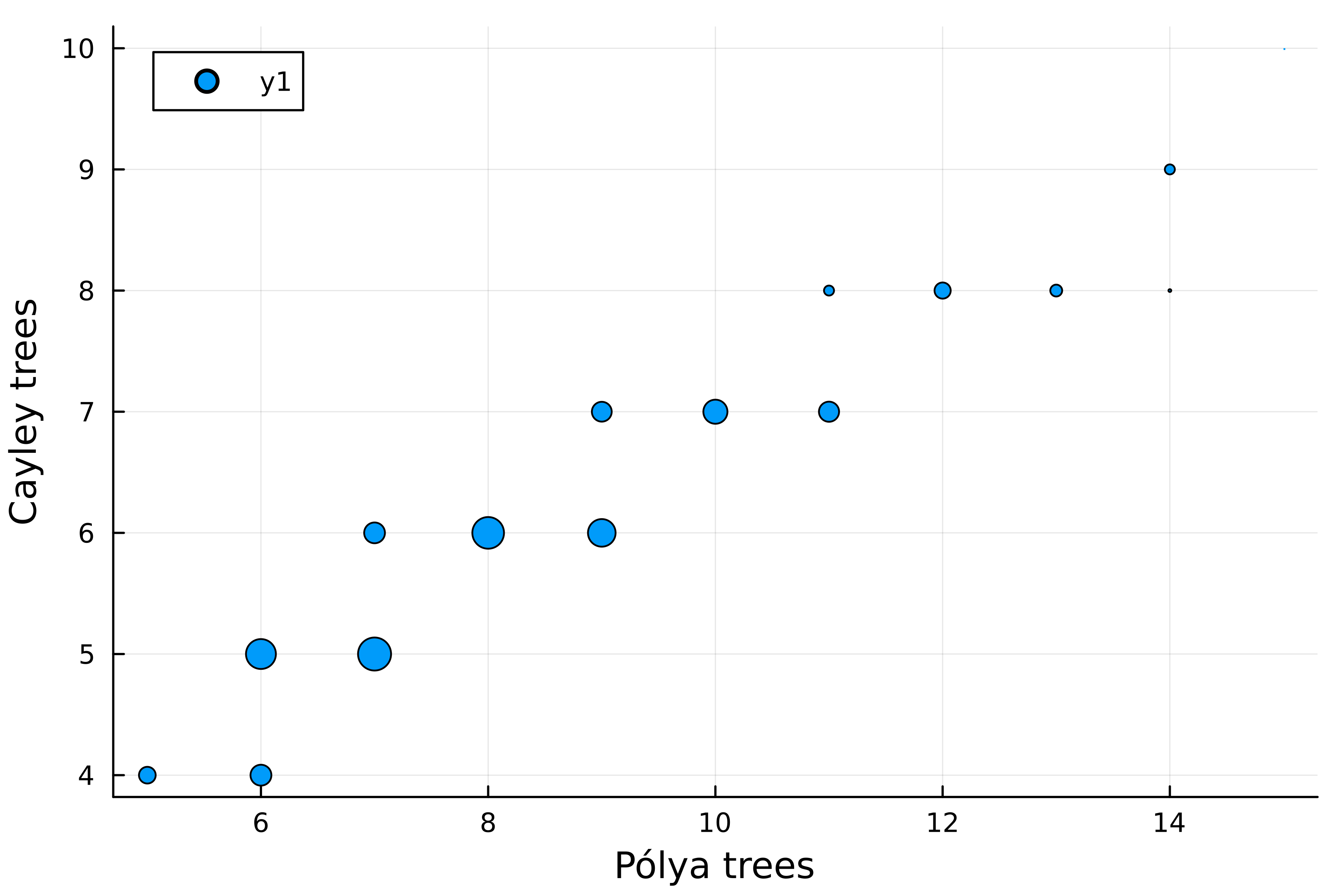}\]

Here the fit between data and predictions is excellent: already for $n=100$ the curves are indistinguishable to the eye, in their cumulative distributions,
\[\includegraphics[width=10cm]{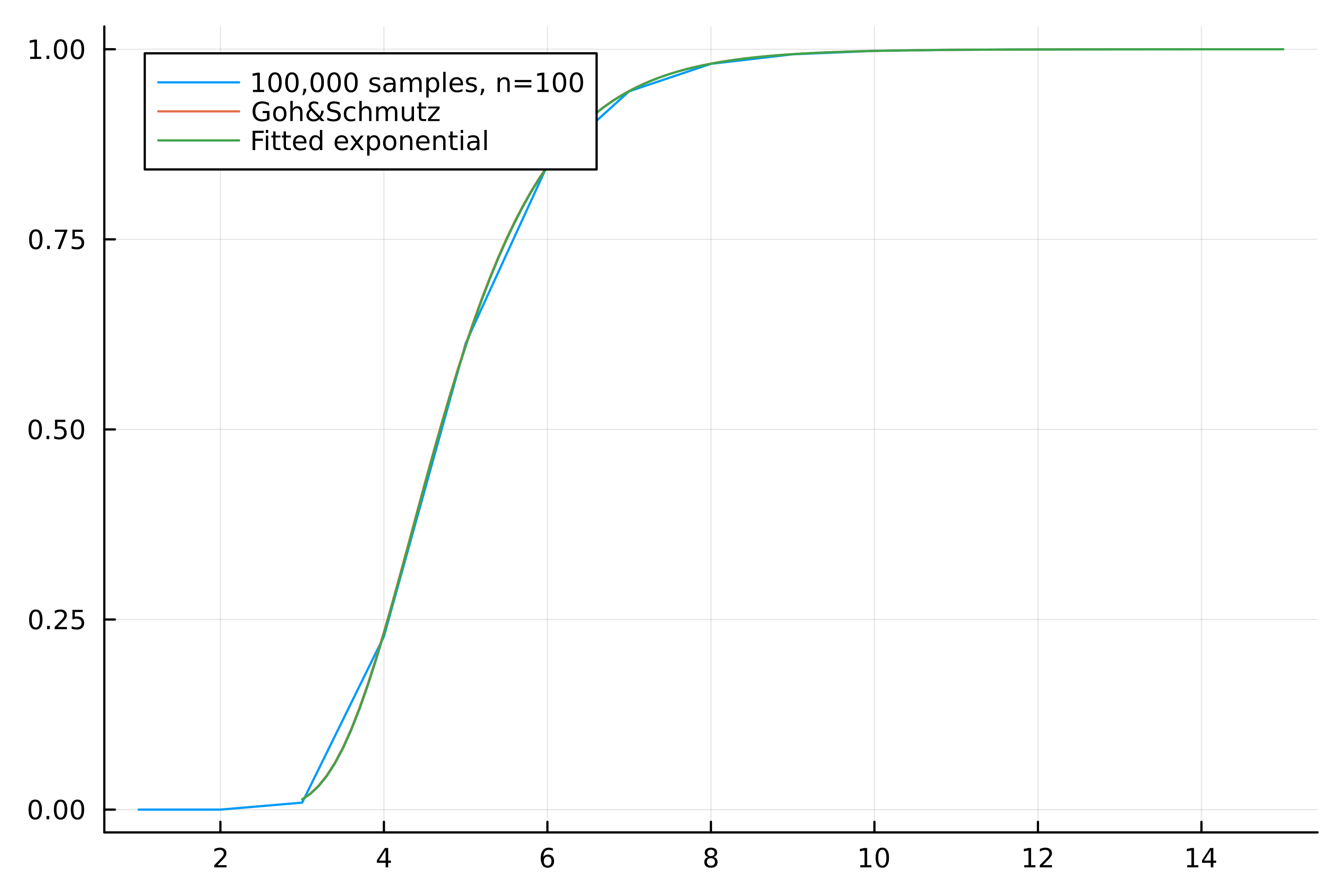}\]
and as a qq-plot comparing to random values sampled according to the cumulative distribution $\exp(-c n\rho^m)$, namely samples $\log(-\log(x)/c)/log(\rho)$ for $x$ drawn uniformly from $[0,1]$:
\[\includegraphics[width=10cm]{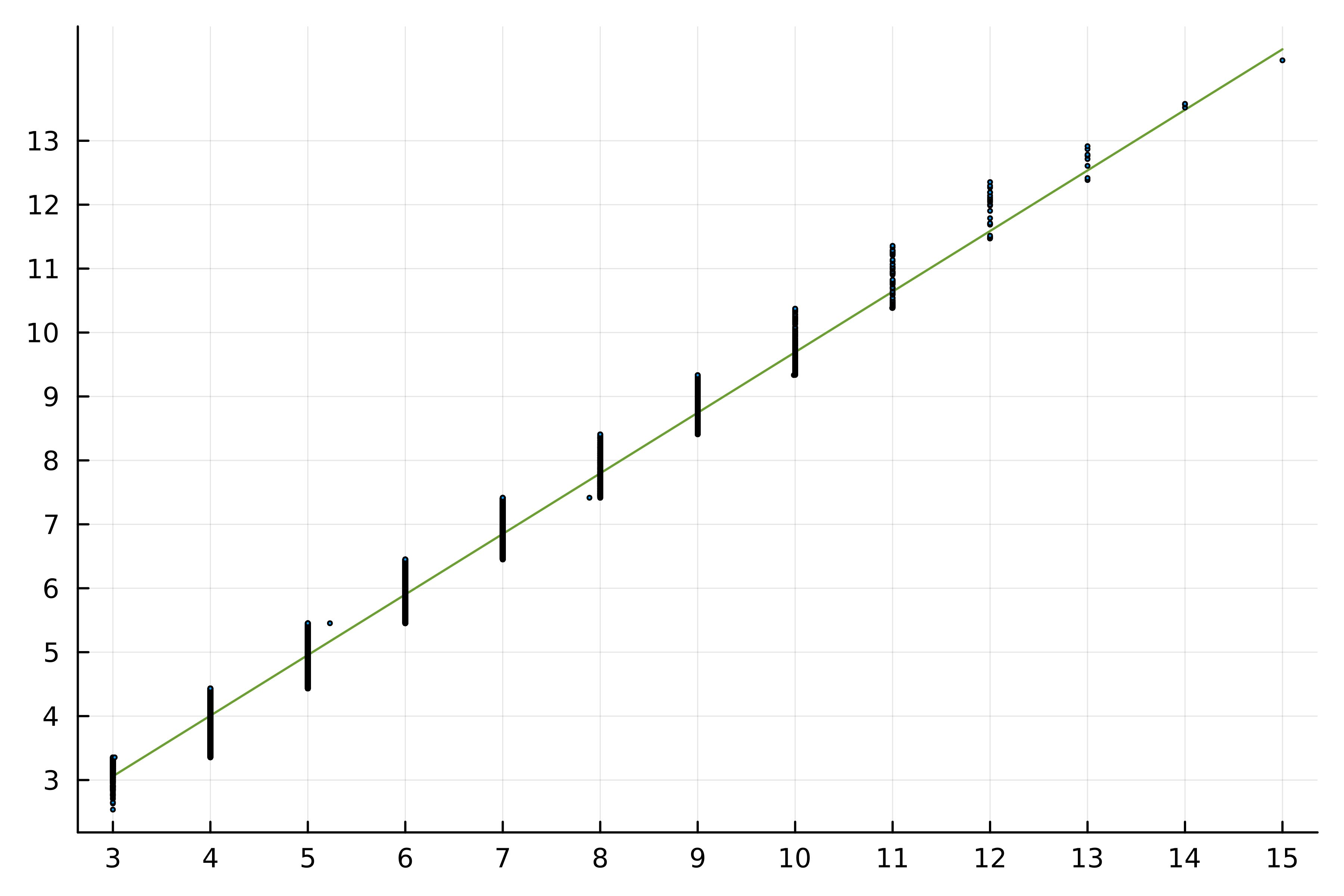}.\]
However, for that last plot, the optimal fitting parameter $c$ was $1.8$, far off from the $1.1103$ predicted by theory.

We finally turn to the number of leaves, or more generally of vertices of given small degree. We increase, here, the number of vertices to $n=10\,000$, and compute $100\,000$ samples, comparing their distribution of degrees (which is normal) to the values predicted in~\cite[Table~2]{robinson-schwenk;distribution}, which we repeat for convenience:
\[\begin{array}{r|cccccccccc}
    \text{degree} & 1 & 2 & 3 & 4 & 5\\
    \mathbf P(\deg) & 0.438\,156 & 0.293\,998 & 0.159\,114 & 0.068\,592 & 0.026\,027\\ \hline
    \text{degree} & 6 & 7 & 8 & 9 & 10\\
    \mathbf P(\deg) & 0.009\,259 & 0.003\,198 & 0.000\,985 & 0.000\,355 & 0.000\,316
  \end{array}\]
The means of the histograms in Table~\ref{fig:degreedistribution} closely match the exact asymptotics for the mean degrees of above table. For example the mean of the degree $1$ vertices is 0.4382, in agreement with the theoretical mean $0.438\,156$. This match of our simulations to theory is one more verification that our code is working as it should.

\subsection{Estimating the constants $b,\rho$}
The constants $b,\rho$ appearing in Theorem~\ref{thm:otter} may be determined to high precision as follows. Consider a large parameter $n$ (in our experiments, $40$ is plenty), and the map
\[F\colon\R^{n+1}\to\R^{n+1},\;((u_1,\dots,u_n),\rho)\mapsto((u_i-\rho^i\exp(\sum_{1\le j\le n/i} u_{i\cdot j}/j)_{i=1,\dots,n},u_1-1).\]
Then $\rho$ is the last coordinate of a zero of $F$, and for $\rho_0=0.338$ the point $((\rho_0^i)_{i=1,\dots,n},\rho_0)$ is in its basin of attraction, so Newton's method converges very fast to a solution $\rho$ with hundreds of digits. Then to compute $b$ we replace the last coordinate of $F$ by $u_1-1+\epsilon$, find a solution $\rho_\epsilon$, and estimate $b$ as $\epsilon/\sqrt{\rho-\rho_\epsilon}$. This merely requires thrice more significant digits from $\rho$ than we get for $b$.


\bibliographystyle{amsalpha}
\bibliography{trees}

\appendix

\section{Code and remarks on performance}

All experiments were performed using the programming language \textsc{Julia}. Its many advantages include speed, parallelism, a large collection of libraries implementing statistics, plotting, and special functions; and the ability to link in external code such as the DejaVu functions.

The experiments are all part of a Jupyter notebook, which can be downloaded as part of the resources. The degree count data in~\S\ref{ss:degrees} has been extracted, and can be downloaded in plain text format (one column per degree, $100\,000$ rows). The data are available at\\
\centerline{\url{https://doi.org/10.5281/zenodo.14186424}}

\section{Conclusions}
We have devised and implemented an efficient, robust procedure for sampling Pólya trees. Using it, we have compared experimental data to the existing literature. Our simulations show a systematic departure from the asymptotics. Perhaps there is a correction term which eventually tends to zero but slowly enough (or with a large enough constant) to be visible for present sample sizes.

We also do not have any theoretical foundation for the performance of our algorithm. The data we obtained are compatible with the possibility that the Markov chain mixes in constant (i.e.\ independent of $n$) time, and that $20$ steps of the Burnside process produce a Pólya tree that is for all practical purposes indistinguishable from a uniform random sample.

\end{document}